\newtheorem{theorem}{Theorem}[section]
\newtheorem{question}[theorem]{Question}
\newtheorem{conjecture}[theorem]{Conjecture}
\newtheorem{lemma}[theorem]{Lemma}
\newtheorem{prop}[theorem]{Proposition}
\newtheorem{cor}[theorem]{Corollary}
\newtheorem{example}[theorem]{Example}
\newtheorem*{conj1'}{Conjecture 1'}
\theoremstyle{definition}
\newtheorem{definition}[theorem]{Definition}
\newtheorem{notation}[theorem]{Notation}
\newtheorem{defn}[theorem]{Definition}
\newtheorem{remark}[theorem]{Remark}
\newcommand{\Z}{{\mathbb Z}}
\newcommand{\N}{{\mathbb N}}
\newcommand{\CA}{\mathcal{A}}
\newcommand{\CS}{\mathcal{S}}
\newcommand{\CT}{\mathcal{T}}
\newcommand{\bfm}{\mathbf{m}}
\newcommand{\bfn}{\mathbf{n}}
\newcommand{\bn}{\mathbf{n}}
\newcommand{\bfv}{\mathbf{v}}
\newcommand{\bfu}{\mathbf{u}}
\newcommand{\bfe}{\mathbf{e}}
\newcommand{\vre}{\varepsilon}
\newcommand{\diam}{\operatorname{diam}}
\newcommand{\proj}{\operatorname{proj}}
\newcommand{\ext}{\operatorname{ext}}
\newcommand{\ignore}[1] {}
\newcommand{\conv}{{\rm conv}}
\newcommand{\rst}[1]{\ensuremath{{\mathbin\upharpoonright}%
\raise-.5ex\hbox{$#1$}}}
\title{Complexity and directional entropy in two dimensions}
\author{Ryan Broderick}
\address{Northwestern University, Evanston, IL 60208 USA}
\email{ryan@math.northwestern.edu, kra@math.northwestern.edu}
\author{Van Cyr}
\address{Bucknell University, Lewisburg, PA 17837 USA}
\email{van.cyr@bucknell.edu}
\author{Bryna Kra}
\thanks{The  third author was partially supported by NSF grant $1200971$.}
\begin{document}
\begin{abstract} 
We study the directional entropy of the dynamical system associated to a $\Z^2$ 
configuration in a finite alphabet.  We show that under local assumptions on the complexity, either every direction has zero topological entropy or some direction is periodic.  
In particular, we show that all nonexpansive directions in a $\Z^2$ system with the 
same local assumptions have zero directional 
entropy.  
\end{abstract} 

\maketitle

\section{Introduction}

A classic problem in dynamics is to deduce global properties of a system from local assumptions.  A beautiful example 
of such a result is the Morse-Hedlund Theorem~\cite{MH}: a local assumption on the {\em complexity} of a system is equivalent 
to the global property of {\em periodicity} of the system.  Any periodic system trivially has zero topological entropy.  In higher dimensions, this local to global connection is less well understood.  Again, there is a natural local assumption on the system that implies zero topological entropy, but now one can study the finer notion of directional behavior and new subtleties arise: under this assumption some directions may have positive directional entropy, while others do not.  We prove that there are natural local assumptions on the complexity of a $\Z^2$ system under which either every direction has zero topological directional entropy or some direction is periodic.  

To explain the results more precisely, for a finite alphabet $\CA$, we study functions of the form $\eta\colon\Z^2\to\CA$ which we  view as colorings of $\Z^2$.  For $\bfn\in\Z^2$, define 
the translation $T^{\bfn}\colon\Z^2\to\Z^2$ by $T^{\bfn}({\bf x}):={\bf x}+\bfn$ and for fixed $\eta\colon\Z^2\to\CA$, define $T^{\bfn}\eta\colon\Z^2\to\CA$ by $T^{\bfn}\eta({\bf x}):=\eta(T^{\bfn}{\bf x})$.  If $\mathcal{S}\subset\Z^2$, then an $\eta$-coloring of $\mathcal{S}$ is any function of the form $T^{\bfn}\eta\rst{\mathcal{S}}$, where by $\eta\rst{\CS}$ 
we mean the restriction of the coloring $\eta$ of $\Z^2$ to the set $\CS$.  
If $K\subset\mathbb{R}^2$ is compact, we define the complexity $P_\eta(K)$ to be
the number of distinct $\eta$-colorings of $K \cap \Z^2$:
$$P_\eta(K) = \bigl\vert\{T^{\bfn}\eta\rst{K \cap \Z^2} \colon  \bfn \in \Z^2\}\bigr\vert,$$
where $\vert\cdot\vert$ denotes the cardinality.  
This is a generalization to two dimensions of the one dimensional complexity $P_\alpha\colon \N\to\N$ defined 
for $\alpha\colon\Z\to\CA$, where $P_\alpha(n)$ is defined to be the number of distinct words of length $n$ appearing 
in $\alpha$.  

It follows immediately from the definition that $P_\eta$ is monotonic: If $A \subseteq B \subset \mathbb{R}^2$ are 
compact sets,
then $P_\eta(A) \leq P_\eta(B)$. 
When $K$ is the rectangle $[0,n-1] \times [0,k-1]$, we write
$P_\eta(n,k)$ instead of $P_\eta(K)$.

There is a standard dynamical system associated to a configuration such as $\eta\colon\Z^2\to\CA$.  
We endow $\CA$ with the discrete topology and 
$\CA^{\Z^2}$ with the product topology. 
For $\eta\colon\Z^2\to\CA$, we let $X_\eta$ denote the orbit closure 
of $\eta$ under the $\Z^2$ translations $\{T^{\bfn}\colon\bfn\in\Z^2\}$.
Then $X_\eta$, endowed with a distance $\rho$ defined by 
$$\rho(x,y) = 2^{-\min \{\|\bfm\| \colon  x(\bfm) \neq y(\bfm)\}}$$ 
for $x, y\in \CA^{\Z^2}$, 
and with the $\Z^2$ action by translation, is a $\Z^2$ topological dynamical system. 
We refer to an element of the system $X_\eta$ as an {\em $\eta$-coloring} of $\Z^2$.

We say that $\eta\colon\Z^2\to\CA$  is {\em periodic} if there exists $\bfm\neq\bf{0}$ such that $\eta(\bfm+\bfn) = \eta(\bfn)$ for all $\bfn\in\Z^2$ (note that this means that $\eta$ has a direction of periodicity but is not necessarily doubly periodic). 
Again, this is a two dimensional generalization of periodicity for some $\alpha\colon\Z\to\CA.$
It was conjectured by Nivat~\cite{nivat} that for $\eta\colon\Z^2\to\CA$,  if there exist $n, k \in \N$ such that $P_\eta(n,k) \le nk$,
then $\eta$ is periodic.  
This question remains open, but in~\cite{CK} it is observed that a system with
such a complexity bound has zero topological entropy. 

In this work, we study the finer notion of directional entropy, which was 
introduced for cellular automata by Milnor~\cite{milnor} (see Section~\ref{sec:directional} for the definition).
If the directional entropy is finite in all directions, then the system has zero topological entropy, but the converse is false: 
zero topological entropy does not imply anything more than the existence of a single direction with finite directional entropy. 
We study the directional entropy of a system under a low complexity assumption (this assumption is made precise in Theorem~\ref{trichotomy}).

 Boyle and Lind~\cite{BL}  further 
analyzed directional entropy for topological dynamical systems and related it to expansive subdynamics.   
We  use their definition, but restricted to our two dimensional setting: 
\begin{definition}
If $X$ is a dynamical system with a continuous $\Z^2$ action $(T^{\bn}\colon \bn\in\Z^2)$, we say that 
a line $\ell\subset\mathbb{R}^2$ is {\em expansive} if there exists $r> 0$ such that if 
$x,y\in X$ satisfy $x({\bn}) = y(\bn)$ for all $\bn\in\Z^2$ with $\rho(\bn, \ell)< r$, then $x=y$.  
If  $\ell$ is not an expansive line, we say that it is {\em nonexpansive}.  
\end{definition}

For the full shift $X = \CA^{\Z^2}$ with the $\Z^2$ action by translations, it is easy to check that 
there are no expansive lines. 
However, restricting to a system of the form $X_\eta$ for some $\eta\colon\Z^2\to\CA$, there are more possibilities.  
If $\eta$ is periodic, then either the directional entropy $h(\mathbf{u}) = 0$ for all $\mathbf{u} \in S^1$ or there is a single direction of zero entropy.  
In the former case, $\eta$ has an expansive direction with zero entropy and in the latter case, the unique direction of zero entropy 
is nonexpansive.  

Thus, assuming Nivat's conjecture, if there exist $n,k \in \Z$ such that 
$P_\eta(n,k) \leq nk$, then  the directional entropy of $\eta$  is either zero in all directions or there is a unique direction of zero directional entropy.    We show that this conclusion holds under the stronger hypothesis 
that a complexity assumption holds for infinitely many pairs $n_i, k_i$ (as usual, $\mathbf{e}_1$
		and $\mathbf{e}_2$ denote the standard basis vectors): 
\begin{theorem}
\label{trichotomy}
Assume $\CA$ is a finite alphabet and $\eta\colon \Z^2\to\CA$.  If there exists an infinite sequence $n_i, k_i\in\N$ 
such that $P_\eta(n_i,k_i) \le n_ik_i$,
then either
	\begin{enumerate}
		\item $h(\mathbf{u}) = 0$ for all $\mathbf{u} \in S^1$, or
		\item there is a unique nonexpansive direction for $\eta$, which is either $\mathbf{e}_1$
		or $\mathbf{e}_2$, and $\eta$ is periodic in this direction.
	\end{enumerate}
\end{theorem}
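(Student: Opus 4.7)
The plan is to dichotomize according to whether any direction has positive directional entropy, and to use the infinite sequence of complexity bounds to rule out positive entropy in oblique directions, forcing periodicity in an axis direction whenever an axis carries positive entropy.

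Suppose first that $h(\mathbf{u}) > 0$ for some $\mathbf{u} \in S^1$, since otherwise conclusion (i) holds; by \cite{BL} any such $\mathbf{u}$ is nonexpansive. The heart of the argument is to rule out positive entropy in oblique directions. For $\mathbf{u} = (\cos\theta, \sin\theta)$ with $\theta \in (0,\pi/2)$, a strip $R_{L,T}$ of length $T$ and width $L$ aligned with $\mathbf{u}$ embeds in an axis-aligned rectangle whose dimensions are both of order $T$, so given each hypothesized pair $(n_i, k_i)$ one can embed into $[0,n_i-1]\times[0,k_i-1]$ a strip of length $T_i$ proportional to $\min(n_i, k_i)$, yielding
\[
\log P_\eta(R_{L, T_i}) \le \log P_\eta(n_i, k_i) \le \log(n_i k_i).
\]
Since $\log P_\eta(R_{L,\cdot})$ is subadditive in the length (a strip of length $T+T'$ is the concatenation of strips of lengths $T$ and $T'$), Fekete's lemma gives the existence of $h_L := \lim_T T^{-1} \log P_\eta(R_{L,T})$, and the displayed inequality forces $h_L = 0$ under a mild growth condition on the sequence (e.g., $\log(n_i k_i) = o(\min(n_i,k_i))$); letting $L \to \infty$ gives $h(\mathbf{u}) = 0$. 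The parallel argument with axis-aligned strips of bounded width simultaneously yields $h(\mathbf{e}_1) = h(\mathbf{e}_2) = 0$ in this regime, placing us in case (i).

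The remaining regime is when that growth condition fails; by passing to a subsequence and swapping coordinates if needed, we may assume $k_i = K$ is a fixed constant and $n_i \to \infty$, so $P_\eta(n_i, K) \le n_i K$ for infinitely many $n_i$. The goal is to promote this linear complexity bound on horizontal $K$-strips into periodicity of $\eta$ in direction $\mathbf{e}_1$. A direct application of Morse--Hedlund \cite{MH} to the sequence $\beta\colon\Z\to\CA^K$ of horizontal $K$-strips is insufficient because the bound $P_\beta(n_i) \le K n_i$ retains the multiplicative constant $K$; instead, one combines it with the complexity inequalities on smaller substrips $k' \le K$ together with Cyr--Kra-type techniques \cite{CK} at arbitrarily large scales to extract $\mathbf{e}_1$-periodicity. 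Once $\eta$ is periodic in $\mathbf{e}_1$ with period $p$, uniqueness of the nonexpansive direction is immediate: any line $\ell$ not parallel to $\mathbf{e}_1$ is intersected by a strip of width at least $p$ in a full period on each horizontal row, so the values of $\eta$ on the strip determine $\eta$ on all of $\Z^2$ and $\ell$ is expansive.

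The main obstacle I anticipate is the middle step: extracting $\mathbf{e}_1$-periodicity from a linear-with-constant complexity bound on bounded-height horizontal strips. Morse--Hedlund yields periodicity only from $P_\beta(n_0) \le n_0$, whereas here one has $P_\beta(n_i) \le K n_i$ at infinitely many scales. The hope is that the two-dimensional structure of $\eta$, combined with the availability of \emph{arbitrarily large} scales rather than a single one as in Nivat's conjecture, compensates for the multiplicative slack $K$ and forces genuine periodicity.
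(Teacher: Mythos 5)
Your first regime---the embedding-plus-subadditivity argument showing that all directional entropies vanish whenever $\log(n_ik_i)=o(\min(n_i,k_i))$ along a subsequence---is essentially the paper's Proposition~\ref{convexentropy} and Corollary~\ref{exponential-eccentricity}, and that part is fine. The first genuine gap is in your reduction of the complementary regime: the failure of that growth condition does \emph{not} allow you to assume $k_i=K$ is a fixed constant. After relabeling so that $n_i\le k_i$, it only yields $\log k_i\ge c\,n_i$, i.e.\ $k_i\ge C^{n_i}$; the smaller dimension $n_i$ may still tend to infinity (take $n_i=i$, $k_i=2^i$). Your reduction to strips of bounded height therefore skips precisely the case the paper works hardest on, and the paper's counting argument in fact \emph{requires} $n_i\to\infty$ (it chooses $i$ so that $(2C)^{4/n_i}<\sqrt{\lambda}$ in Lemma~\ref{few-periodic}).

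The second and more serious gap is the step you yourself flag as ``the main obstacle'': upgrading the bounds $P_\eta(n_i,k_i)\le n_ik_i$ to periodicity. This is the entire content of Section~4 and you do not supply it; a hope that two-dimensionality ``compensates for the multiplicative slack'' is not an argument. Moreover your outline tries to prove periodicity unconditionally in this regime, which is too strong: the correct structure is to first show $h(\mathbf{e}_2)=0$ directly from Proposition~\ref{convexentropy}, then split on whether $\mathbf{e}_2$ is expansive. If it is, Theorem~6.3(4) of \cite{BL} places you in case (i) with no periodicity claim; only if $\mathbf{e}_2$ is nonexpansive does one run the machinery of balanced sets, the extension lemmas, and the count of vertically periodic colorings (Lemmas~\ref{balanced} through~\ref{trapezoid}) to show $P_\eta(B_{m,k})\le |B_{m,k}|/2$ for a suitable trapezoidal region, and only then invoke Theorems~1.4 and~1.5 of \cite{CK} to get periodicity and uniqueness of the nonexpansive direction. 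Your closing observation that periodicity in $\mathbf{e}_1$ forces every non-parallel line to be expansive is correct as far as it goes, but it rests entirely on the periodicity you have not established.
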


An immediate consequence is the following.
\begin{cor}
Assume $\CA$ is a finite alphabet and $\eta\colon \Z^2\to\CA$.  If there exists an infinite sequence $n_i, k_i\in\N$ 
such that  $P_\eta(n_i,k_i) \le n_ik_i$,
	then $\eta$ has zero directional entropy along each of its nonexpansive directions.
\end{cor}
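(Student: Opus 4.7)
The plan is to deduce the corollary directly from Theorem~\ref{trichotomy}. Under the given complexity hypothesis the trichotomy places $\eta$ in exactly one of the two cases stated there, and I would verify the conclusion in each.

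In the first case, $h(\mathbf{u})=0$ for every $\mathbf{u}\in S^1$, so a fortiori the directional entropy vanishes along every nonexpansive direction, and there is nothing further to check.

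In the second case, there is a unique nonexpansive direction $\mathbf{u}$, which is $\mathbf{e}_1$ or $\mathbf{e}_2$, and $\eta$ is periodic in this direction. It then suffices to show that periodicity along $\mathbf{u}$ forces $h(\mathbf{u})=0$. Taking $\mathbf{u}=\mathbf{e}_1$ for definiteness, with period vector $(p,0)$ for some $p\ge 1$, every element of $X_\eta$ inherits this period. Hence the restriction of any $x\in X_\eta$ to a long thin rectangle $[0,N-1]\times[-r,r]$ is determined by its values on the single block $[0,p-1]\times[-r,r]$, so there are at most $|\CA|^{p(2r+1)}$ such restrictions. Since this bound is independent of $N$, the logarithmic growth rate in $N$ is zero, which by the Milnor--Boyle--Lind definition of $h(\mathbf{u})$ gives $h(\mathbf{e}_1)=0$. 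The case $\mathbf{u}=\mathbf{e}_2$ is identical up to swapping coordinates.

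The only content beyond invoking Theorem~\ref{trichotomy} is the last observation that periodicity along a direction forces the directional entropy in that direction to vanish; this is routine bookkeeping between the definition of $h(\mathbf{u})$ via asymptotic coloring counts in slabs aligned with $\mathbf{u}$ and the obvious fact that such slabs contain only $O(1)$ independent periods of data. I therefore do not anticipate any substantive obstacle beyond the proof of Theorem~\ref{trichotomy} itself.
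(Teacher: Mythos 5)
Your proposal is correct and follows the route the paper intends: the corollary is stated as an immediate consequence of Theorem~\ref{trichotomy}, with case (i) trivial and case (ii) reduced to the observation that periodicity of $\eta$ along a direction forces the directional entropy in that direction to vanish (which your slab-counting argument, combined with Lemma~\ref{entropycomplex}, establishes correctly). No gaps.
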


\section{Sufficient conditions for zero directional entropy}
\label{sec:directional}

We start by reviewing some definitions from ~\cite{CK}.  
If $\CS\subset\mathbb{R}^2$, we denote the convex hull of $\CS$ by $\conv(\CS)$.  We say $\CS\subset\Z^2$ is {\em convex} if $\CS=\conv(\CS)\cap\Z^2$.
Define the {\em volume} of convex $\CS\subset\Z^2$ to 
be the volume of its convex hull and define the \emph{boundary} $\partial(\CS)$ of a convex set $\CS\subset \Z^2$
to be the boundary of $\conv(\CS)$.
Given a convex set in $\Z^2$ of positive volume, 
we endow its boundary with the positive orientation, so that
it consists of directed line segments.  If $\CS\subset\Z^2$ is convex and has zero volume, then $\conv(\CS)$ is 
a line segment in $\mathbb{R}^2$ and in this case, we do not define an orientation on $\partial(\CS)$.

For finite $\CS\subset\Z^2$ and $\eta\colon\Z^2\to\CA$, we define $X_\CS(\eta)$ to be the 
$\Z^2$ subshift of finite type generated by the $\CS$ words of $\eta$, 
meaning that $X_\CS(\eta)$ consists of all $f\in\CA^{\Z^2}$ such that all $f$-colorings of $\CS$ 
appear as $\eta$-colorings of $\CS$.  

\begin{defn}
Suppose $\CS\subset\mathcal{T}\subset\Z^2$ are nonempty, finite sets and that $f\colon\CS\to\CA$ is an $\eta$-coloring of $\CS$.  We say that $f$ {\em extends uniquely} to an $\eta$-coloring of $\mathcal{T}$ if there is exactly one $\eta$-coloring of $\mathcal{T}$ whose restriction to $\CS$ coincides with $f$.
\end{defn}

\begin{defn}
If $\CS \subset \Z^2$ is a  nonempty, finite, convex set, 
then $x\in\CS$ is {\em $\eta$-generated} by $\CS$ if every $\eta$-coloring of $\CS\setminus\{x\}$ 
extends uniquely to an $\eta$-coloring of $\CS$, and $\CS$ is an {\em $\eta$-generating set} if every boundary vertex of $\CS$ 
is $\eta$-generated.  When $\eta$ is clear from context, we refer to an $\eta$-generating set as a {\em generating set}.
\end{defn}

Generating sets give rise to zero topological entropy: 
\begin{lemma}[\cite{CK}, Lemma 2.15]
\label{topentropy}
If $\CS \subset \Z^2$ is a generating set for 
$\eta \colon \Z^2\to \mathcal{A}$ and $\CS^{\prime} \supset \CS$ is finite, 
then the topological entropy of the $\Z^2$ dynamical system 
$(X_{\CS^{\prime}}, \{T^{\mathbf{u}}\}_{\mathbf{u}\in\Z^2})$ is zero. 
\end{lemma}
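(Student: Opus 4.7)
The plan is to show that the number of distinct patterns on $B_N=[0,N-1]^2$ appearing in elements of $X_{\CS'}$ grows at most like $|\CA|^{O(N)}$, which forces the topological entropy of the $\Z^2$-SFT $X_{\CS'}$ to vanish.

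Because $\CS\subseteq \CS'$, the restriction of any $f\in X_{\CS'}$ to a translate of $\CS$ is an $\eta$-coloring of $\CS$, so the generating hypothesis applies directly. I will choose a vector $\mathbf{u}\in\mathbb{R}^2$ that is not perpendicular to any edge of $\conv(\CS)$; then the linear functional $\langle\,\cdot\,,\mathbf{u}\rangle$ attains its minimum on $\CS$ at a unique point $x^*$, which is an extreme point of $\conv(\CS)$ and hence a boundary vertex of $\CS$. By hypothesis, every $\eta$-coloring of $\CS\setminus\{x^*\}$ extends uniquely to an $\eta$-coloring of $\CS$.

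The key geometric observation is that for any $z\in\Z^2$ the translate $(z-x^*)+\CS$ contains $z$ (as the image of $x^*$), while every other point of it lies in the open half-plane $\{w:\langle w,\mathbf{u}\rangle>\langle z,\mathbf{u}\rangle\}$. Set $d=\diam(\CS)$, and let $F=B_N\setminus[d,N-d-1]^2$ be the frame of width $d$ around the boundary of $B_N$, so $|F|=O(dN)$. Enumerate the interior points $z_1,z_2,\ldots$ of $B_N$ in any order consistent with strict decrease of $\langle\,\cdot\,,\mathbf{u}\rangle$. For each $z_i$, the set $(z_i-x^*)+\CS$ is contained in $B_N$ (since $z_i$ lies at distance at least $d$ from its boundary), and the points of $(z_i-x^*)+(\CS\setminus\{x^*\})$ all have strictly larger $\mathbf{u}$-coordinate than $z_i$, so they lie either in $F$ or among $\{z_1,\ldots,z_{i-1}\}$. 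The unique-extension property at $x^*$, applied after translating $(z_i-x^*)+\CS$ back to $\CS$, then determines $f(z_i)$ from $f$ on these already-known sites.

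Inductively, $f|_{B_N}$ is determined by $f|_F$, so the number of patterns of $X_{\CS'}$ appearing on $B_N$ is at most $|\CA|^{|F|}\le|\CA|^{CN}$ for some constant $C=C(\CS)$. Dividing by $N^2$ and sending $N\to\infty$ yields $h_{\mathrm{top}}(X_{\CS'})=0$. The only delicate point is verifying that when we fill in $z_i$, the translated determining set $(z_i-x^*)+(\CS\setminus\{x^*\})$ really sits inside $F\cup\{z_1,\ldots,z_{i-1}\}$; this uses the generic choice of $\mathbf{u}$ (to make $x^*$ an extreme point and place all other points of the translate strictly on one side of the hyperplane through $z_i$) together with the frame being at least as thick as $\diam(\CS)$. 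Once these are arranged, the rest of the argument is bookkeeping.
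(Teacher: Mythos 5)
The paper gives no proof of this lemma, citing it to [CK, Lemma~2.15], so there is nothing internal to compare against; your argument is correct and is essentially the standard sweeping argument that underlies the cited result. Picking a linear functional whose minimum on $\CS$ is attained at a unique vertex $x^*$ (which is therefore $\eta$-generated), noting that for $f\in X_{\CS'}$ every restriction to a translate of $\CS\subseteq\CS'$ is an $\eta$-coloring of $\CS$, and filling in the interior of $[0,N-1]^2$ in order of decreasing functional value from a frame of width $\diam(\CS)$ correctly gives at most $|\CA|^{O(N)}$ patterns on $[0,N-1]^2$, hence zero topological entropy.
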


We review the definition of directional entropy introduced by Milnor~\cite{milnor}.

\begin{notation}
\label{notation:larger}
If $T$ is a continuous $\Z^2$ action on the compact metric space $(X,\rho)$, 
$E\subset \mathbb{R}^2$ is a compact set, and $\vre > 0$,
set $N_{T}(E,\vre)$ to be cardinality of the smallest set $Y \subset X$
such that for each $x \in X$ there exists $y \in Y$ with $\rho(T^{\bfn}(x) , T^{\bfn}(y)) < \vre$
for each $\bfn \in E \cap \Z^2$. 
For a compact set $E$ and $t > 0$ , let 
$$
E^{(t)} = \{\bfv\colon \|\bfv - \bfu\| < t \text{ for some } \bfu \in E\}$$
denote the {\em $t$-neighborhood of $E$} and 
let $tE = \{t\bfu\colon \bfu \in E\}$ 
denote the {\em  $t$-dilation} of $E$. 
\end{notation}

\begin{defn}
If $\Phi$ is a set of $k$ linearly independent vectors and $Q_\Phi$ is the parallelepiped spanned
by $\Phi$, then the \emph{$k$-dimensional topological directional entropy} $h_k(\Phi)$ is defined to be 
$$h_k(\Phi) = \lim_{\vre\to0} \sup_{t > 0} \overline{\lim_{s\to \infty}} \frac{\log N_T((sQ_\Phi)^{(t)},\vre)}{s^k}.$$
\end{defn}

We compute the directional entropy for the $\Z^2$ action by translations on the 
space $X_\eta$, in which case it is straightforward to recast the definition in terms of complexity. 
To do so, we make a slight abuse of notation and for a vector $\bfv \in \mathbb{R}^2$, we write $[0,\bfv]$ for $\{\vre \bfv \colon 0 \le \vre \le 1\}$.
As we are interested in one dimensional directional entropy, to simplify notation, when $\Phi = \{\bfu\}$ for some unit vector $\bfu$, we write $h(\bfu)$, instead of $h_1(\Phi)$.

\begin{lemma}
\label{entropycomplex}
Assume $\eta \colon \Z^2 \to \mathcal{A}$.  
If $\bfu$ is a unit vector, then
the ($1$-dimensional) topological directional entropy $h(\bfu)$ of the $\Z^2$ action by translation
on $X_\eta$ in the direction of $\bfu$ is given by 
$$h(\bfu) = \sup_{t>0}\overline{\lim_{s\to\infty}}\frac{\log P_\eta([0,s\bfu]^{(t)})}{s}.$$
\end{lemma}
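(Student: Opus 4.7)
The plan is to unwind the definition of $N_T$ in terms of the shift metric, show that it equals the complexity of a thickened rectangle, and then verify that the dependence on $\vre$ and $t$ collapses into a single supremum over $t$.

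First, I would exploit the explicit form of $\rho$. Given $\vre>0$, choose $r=r(\vre)$ with $2^{-r}<\vre$; then for $x,y\in X_\eta$, the inequality $\rho(T^{\bfn}x,T^{\bfn}y)<\vre$ holds if and only if $x(\bfn+\bfm)=y(\bfn+\bfm)$ for every $\bfm\in\Z^2$ with $\|\bfm\|\le r$. Consequently, requiring $\rho(T^{\bfn}x,T^{\bfn}y)<\vre$ for all $\bfn\in E\cap\Z^2$ is equivalent to requiring $x$ and $y$ to agree on the set $F(E,\vre)\df(E\cap\Z^2)+\{\bfm\in\Z^2\colon\|\bfm\|\le r\}$. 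A minimal $\vre$-spanning set for $E$ is therefore obtained by choosing one element of $X_\eta$ for each distinct restriction to $F(E,\vre)$, and since every pattern seen in $X_\eta$ on a finite set is already an $\eta$-pattern (as $X_\eta$ is the orbit closure of $\eta$ in the product topology with finite alphabet), I get the identity $N_T(E,\vre)=P_\eta(F(E,\vre))$.

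Next, I specialize to $E=(s[0,\bfu])^{(t)}=[0,s\bfu]^{(t)}$. A routine containment check shows
\[ [0,s\bfu]^{(t)}\cap\Z^2\ \subseteq\ F([0,s\bfu]^{(t)},\vre)\ \subseteq\ [0,s\bfu]^{(t+r+1)}\cap\Z^2, \]
so by monotonicity of $P_\eta$,
\[ P_\eta\bigl([0,s\bfu]^{(t)}\bigr)\ \le\ N_T\bigl((s[0,\bfu])^{(t)},\vre\bigr)\ \le\ P_\eta\bigl([0,s\bfu]^{(t+r+1)}\bigr). \]
Dividing by $s$, taking $\overline{\lim}_{s\to\infty}$, and then $\sup_{t>0}$, the left side yields $\sup_{t>0}\overline{\lim}_{s\to\infty}\log P_\eta([0,s\bfu]^{(t)})/s$, while the right side, after the shift $t\mapsto t+r+1$, yields the same quantity because the supremum is over all positive $t$. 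Hence for every $\vre>0$,
\[ \sup_{t>0}\overline{\lim_{s\to\infty}}\frac{\log N_T((s[0,\bfu])^{(t)},\vre)}{s}\ =\ \sup_{t>0}\overline{\lim_{s\to\infty}}\frac{\log P_\eta([0,s\bfu]^{(t)})}{s}. \]

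Finally, since the right-hand side is independent of $\vre$, taking $\lim_{\vre\to 0}$ leaves it unchanged, and the left-hand side is by definition $h(\bfu)$. This proves the lemma. I do not expect any serious obstacle; the only subtle point is the careful identification in the first step of $N_T(E,\vre)$ with the complexity of the $\Z^2$-thickening $F(E,\vre)$, which requires noting that patterns of $X_\eta$ on a finite set coincide with $\eta$-patterns, together with the fact that the additive constant $r(\vre)$ is absorbed by the supremum over $t$ via monotonicity of $P_\eta$.
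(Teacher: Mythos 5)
Your proposal is correct and follows essentially the same route as the paper: sandwich $N_T([0,s\bfu]^{(t)},\vre)$ between $P_\eta$ of thickened segments whose radii differ by an additive constant depending only on $\vre$, then absorb that constant into $\sup_{t>0}$ using monotonicity of $P_\eta([0,s\bfu]^{(t)})$ in $t$. The only quibble is that your intermediate claim $N_T(E,\vre)=P_\eta(F(E,\vre))$ is stated as an exact identity when with your choice of $r$ it is really only the inequality $N_T\le P_\eta(F)$ (the reverse direction needs $\vre<1$ and gives agreement only on a smaller neighborhood), but the displayed sandwich you actually use is the correct two-sided estimate, matching the paper's.
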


\begin{proof}
Let $T$ denote the $\Z^2$ action by translation.
Fix $0 < \vre < 1$ and let $M = \lfloor -\log_{2} \vre\rfloor$. 
If $x,y \in X_\eta$ satisfy
\begin{equation}
\label{eq:close}
\rho\bigl(T^{\bfn}(x),T^\bfn(y)\bigr) < \vre \text{ for all } \bfn \in [0,s\bfu]^{(t)}, 
\end{equation}
then $x$ and $y$ agree on $[0,s\bfu]^{(t+M)}$.  Conversely, 
if $x$ and $y$ agree on $[0,s\bfu]^{(t+M+1)}$, 
they satisfy~\eqref{eq:close}. 
Thus,

$$P_\eta([0,s\bfu]^{(t+M)}) \le N_T([0,s\bfu]^{(t)},\vre) \le P_\eta([0,s\bfu]^{(t+M+1)})$$
and so
$$\lim_{M\to\infty} \sup_{t>0}\overline{\lim_{s\to\infty}}\frac{P_\eta([0,s\bfu]^{(t+M)}) }{s}
	\le h(\bfu)
	\le \lim_{M\to\infty} \sup_{t>0}\overline{\lim_{s\to\infty}} \frac{\log P_\eta([0,s\bfu]^{(t+M+1)})}{s}.$$
But since $P_\eta([0,s\bfu]^{(t)})$ is non-decreasing in $t$,
\begin{align*}\sup_{t>0}\overline{\lim_{s\to\infty}}\frac{\log P_\eta([0,s\bfu]^{(t+M)}) }{s}
	& =  \sup_{t>0}\overline{\lim_{s\to\infty}}\frac{\log P_\eta([0,s\bfu]^{(t+M+1)}) }{s} \\
	& = \sup_{t>0}\overline{\lim_{s\to\infty}}\frac{\log P_\eta([0,s\bfu]^{(t)}) }{s}.\hfill\qedhere
	\end{align*}
	\end{proof}

Given a generating set $\CS\subset\Z^2$ for some $\eta\colon\Z^2\to\CA$, 
applying Lemma~\ref{topentropy} with $\CS'=\CS$ provides an upper bound on the entropy of the associated dynamical system $X_\eta$.  
We use Lemma~\ref{entropycomplex} to strengthen this result: 
\begin{prop}
\label{generating-directional}
Assume $\eta \colon \Z^2 \to \mathcal{A}$ has
an $\eta$-generating set and let $X_\eta$ be the associated dynamical system endowed with the 
$\Z^2$ action by translation.  Then there exists $c > 0$ such that
$h(\bfu) < c$ for all unit vectors $\bfu \in \mathbb{R}^2$.
\end{prop}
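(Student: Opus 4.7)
The plan is to apply Lemma~\ref{entropycomplex} to recast the question in terms of complexities of thick line segments, and then exploit the generating set to bound those complexities by the \emph{perimeter} (rather than the area) of the segments. Precisely, let $\CS$ be an $\eta$-generating set with $d = \diam(\CS)$. I will show that for every finite convex $R \subset \Z^2$ of sufficiently large diameter,
$$P_\eta(R) \le |\CA|^{d \cdot L(R) + O(d^2)},$$
where $L(R)$ is the Euclidean length of $\partial\conv(R)$. Applied to $R = [0,s\bfu]^{(t)} \cap \Z^2$, whose convex hull has perimeter $2s + 2\pi t$, this yields
$$\limsup_{s\to\infty} \frac{\log P_\eta([0,s\bfu]^{(t)})}{s} \le 2d\log|\CA|,$$
uniformly in $t$ and $\bfu$, and combined with Lemma~\ref{entropycomplex} proves the proposition with $c = 2d\log|\CA| + 1$.

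To establish the perimeter bound, I would adapt the proof of Lemma~\ref{topentropy} from \cite{CK}. Let $N \subset R$ denote the set of lattice points within Euclidean distance $d$ of $\partial\conv(R)$; then $|N| \le d\cdot L(R) + O(d^2)$. I claim every $\eta$-coloring of $R$ is determined by its restriction to $N$, which immediately gives $P_\eta(R) \le |\CA|^{|N|}$. The determination proceeds by a peeling process: starting with the ``known'' region $N$, at each step locate a translate $\bfv + \CS \subset R$ such that exactly one boundary vertex $\bfv + \mathbf{x}$ of this translate is not yet known while the remaining points of $\bfv + \CS$ all are, then invoke the generating property of $\CS$ to determine the coloring at $\bfv + \mathbf{x}$. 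Iterating eventually exhausts $R$.

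The main technical obstacle is verifying that a suitable translate of $\CS$ exists at each step of the peeling process. Since $R$ is convex and, in the regime $t \ge d$, large compared to $\CS$, one can always find an unknown point of $R$ whose ``incoming'' translate of $\CS$ has its other boundary vertices already in the current known region; the detailed geometric argument mirrors that of \cite{CK}. The edge case $t < d$ causes no difficulty because $P_\eta([0,s\bfu]^{(t)}) \le P_\eta([0,s\bfu]^{(d)})$ by monotonicity, so it suffices to bound the right-hand side, and the same constant $c$ still works.
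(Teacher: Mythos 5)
Your argument is correct, but it is organized differently from the paper's. The paper works lengthwise along the segment: it places a translate $\CS'$ of the generating set at the leading edge of $[0,s\bfu]^{(t)}$ and extends any coloring uniquely across most of the next unit slab, obtaining the recursion $P_\eta([0,(s+1)\bfu]^{(t)})\le|\CA|^{2d}P_\eta([0,s\bfu]^{(t)})$, hence $P_\eta([0,s\bfu]^{(t)})\le C|\CA|^{2ds}$, and then applies Lemma~\ref{entropycomplex}. You instead prove a general perimeter bound for arbitrary convex regions by peeling inward from the width-$d$ boundary collar, and only then specialize to the stadium $[0,s\bfu]^{(t)}$, whose perimeter $2s+2\pi t$ is linear in $s$ with the $t$-contribution washed out by the $\limsup$. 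Both arguments rest on the same mechanism (unique extension across a translate of $\CS$ with a single boundary vertex missing) but propagate it in orthogonal directions; yours is essentially a quantitative refinement of Lemma~\ref{topentropy}, trading the paper's lighter bookkeeping for a uniform statement about all convex sets at once, which in particular re-derives zero topological entropy. Two small points to tighten: (1) the existence of a suitable translate at each peeling step should be made explicit, e.g.\ take $\mathbf{x}_0$ to be the vertex of $\conv(\CS)$ minimal for a generic linear functional $\phi$ and process the points $p$ of $R\setminus N$ in decreasing $\phi$-order, so that the translate of $\CS$ placing $\mathbf{x}_0$ at $p$ lies in $R$ (as $p$ is at distance greater than $d$ from $\partial\conv(R)$) and all its other points have strictly larger $\phi$-value, hence are already known; (2) the lattice-point count for the collar gives $|N|\le d\cdot L(R)+O(L(R))+O(1)$ rather than $d\cdot L(R)+O(d^2)$, which changes only the admissible value of $c$ and not the conclusion.
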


\begin{proof}
Let $\CS$ be an $\eta$-generating set and let $d = \diam(\CS)$.
Fix a unit vector $\bfu\in\mathbb{R}^2$ and let $t > 0$.
We claim that there exists $C>0$ such that for sufficiently
large $s > 0$, we have $P_\eta([0,s\bfu]^{(t)}) \le C |\mathcal{A}|^{2ds}$.  Once the claim is proven, the proposition follows from Lemma~\ref{entropycomplex}.

Fix $s> d$. Since $P_\eta([0,s\bfu]^{(t)})$ is non-decreasing in $t$, we can assume that 
$t > d$. Then $[0,s\bfu]^{(t)}$ contains a translate of $\CS$.  Let $\bfu^\perp$ be one of the two unit 
vectors perpendicular
to $\bfu$.  Let $\CS'$ be a translate of $\CS$ such that 
exactly one row of 
$\CS'$ in the direction of $\bfu^\perp$
lies outside $[0,s\bfu]^{(t)}$ and such that $\CS'$ touches one of the two border segments
of $[0,s\bfu]^{(t)}$ in the direction of $\bfu$.  Thus $\CS'$ satisfies
$$\CS' \subseteq [0,(s+1)\bfu]^{(t)} \text{ but } \CS' \not\subseteq [0,(s+\vre)\bfu]^{(t)} \text{ for } \vre < 1,
$$
and 
$$\CS' \cap ([0,(s+1)\bfu] - t\bfu^{\perp}) \neq \emptyset.$$

Fix a coloring of $[0,s\bfu]^{(t)} \cup \CS'$. Since $\CS$ is a generating set,
this coloring extends uniquely to a coloring of 
$[0,s\bfu]^{(t)} \cup (\CS' + [0,\bfu^\perp])$.
This in turn extends uniquely to a coloring of
$[0,s\bfu]^{(t)} \cup (\CS' + [0,2\bfu^\perp])$.
Continuing to extend and using that $\diam(\CS) = d$, 
each coloring of $[0,s\bfu]^{(t)} \cup (s\bfu - t\bfu^\perp+[0,d\bfu^\perp] + [0,\bfu])$
extends uniquely to a coloring of 
$[0,(s+1)\bfu]^{(t)} \setminus (s\bfu + t\bfu^\perp+[0,-d\bfu^\perp]+[0,\bfu])$.
It follows that
$$P_\eta([0,(s+1)\bfu]^{(t)}) \le |\mathcal{A}|^{2d}P_\eta([0,s\bfu]^{(t)}),$$
which completes the proof.
\end{proof}

It was shown in~\cite{sinai} that if a $\Z^2$ topological dynamical system has bounded
directional entropy in all directions, then it has zero topological entropy.  Thus the system $X_\eta$ 
generated by $\eta\colon\Z^2\to\CA$ that also an $\eta$-generating system (as in Proposition~\ref{generating-directional}) has zero topological entropy.
The following example shows that a converse to this result fails,
even for a system $X_\eta$ endowed with translations, showing that Proposition~\ref{generating-directional}
strengthens existing results on the entropy of $X_\eta$: 
\begin{example}
Let $\alpha \colon \Z \to \{0,1\}$ with $P_\alpha(n) = 2^n$
and let $A = \{10^n + i^2 \colon i, n \in \N, 1\le i \le n\}$.
Define $\eta \colon \Z^2 \to \{0,1\}$ by  
$$\eta(i,j) =
\begin{cases}
\alpha(i+j) & \text{if }j \in A\\
\alpha(i) & \text{otherwise.}
\end{cases}
$$
Then the topological entropy of the $\Z^2$ action on $X_\eta$ by translations is zero and 
 $h(\bfe_1) = \infty$.
\end{example}
\begin{proof}
Let  $\beta\colon  \Z \to \{0,1\}$ denote the indicator function of the set $A$.  
We first bound the complexity function $P_\beta(k)$.
Fix $k \in \N$.
Given $m \in \Z$, let $I(m) = \{m, m+1, \dots, m+k-1\}$.
Let $n(m)$ be the smallest $n \in \N$ such that
$10^n + i^2 \in I(m)$ for some $1 \le i \le n$, or
take $n(m) = 0$ if no such $n$ exists. If $n(m) > 0$, let 
$i(m)$ be the minimal $1 \le i \le n(m)$ such that
$n(m) + i^2 \in I(m)$, and let $i(m) = 0$ if $n(m) = 0$.
Finally, let $a(m) = \min(A \cap I(m)) - m$.

If $10^{n(m)} > k$, then $10^n + i^2 \not\in I(m)$ for any $n \neq n(m)$
and any $1 \le i \le n$.
Note that if $i = i(m) > k$, then $(i+1)^2 - i^2 \ge i^2 - (i-1)^2 \ge 2i - 1 > k$,
and so $I(m) \cap A = \{10^{n(m)} + i(m)\}$.
Thus, $\beta\rst{I(m)}$ is determined by $0 \le \min\{n(m), k\} \le k$,
 $0 \le \min\{i(m), k\} \le k$, and $0 \le a(m) \le k$, and
 so $P_\beta(k) \le (k+1)^3$.
 Since $I(m)$ contains at most $\sqrt{k}\log_{10} k \le k^{3/4}$ elements of $A$, 
$$P_\eta(n,k) \le P_\beta(k) (2^n)^{k^{3/4}} = k^32^{nk^{3/4}}$$
and so 
$$\lim_{n\to\infty}\frac{\log P_\eta(n,n)}{n^2} 
		\leq \lim_{n\to\infty}\frac{3\log n + n^{7/4}\log 2}{n^2} = 0.$$

But since there are exactly $\lfloor \sqrt{k}\rfloor$
	elements of $A$ in $10^n + [1,k]$ when $k \le n$, we have that
$$\frac{\log P_\eta([0, n\bfe_1]^{(k/2)})}{n} \ge \frac{\log P_\eta(n,k)}{n}
	\ge \frac{\log (2^n)^{\sqrt{k-1}}}{n} =\sqrt{k-1}\log 2.$$
By Lemma~\ref{entropycomplex}, it follows that 
\begin{equation*}
h(\bfe_1) \ge \sup_{k>0}\sqrt{k-1}\log 2= \infty.  \hfill\qedhere
\end{equation*}
\end{proof}

As shown in~\cite{CK}, if there exist $n, k \in \N$ such that $P_\eta(n,k) \le nk$,
then there is an $\eta$-generating set.  Thus for $\eta$ satisfying such a complexity bound, 
the $\Z^2$ action by translations on $X_\eta$ has bounded directional entropy in all directions.  
In particular, it has zero topological entropy.

\section{A sequence of complexity bounds}

\begin{notation}
\label{notation:thick}
For a unit vector $\bfu \in \mathbb{R}^2$ and a compact set $K\subset\mathbb{R}^2$, we let  
$\tau_\bfu(K)$ denote the thickness of the compact set $K$ in the direction of $\bfu$, 
defined by 
$$\tau_\bfu(K) = \sup\{ \tau \colon [0,\tau \bfu] + \bfn \subset K\text{ for some }\bfn \in \Z^2\}.$$
\end{notation}

\begin{prop}
\label{convexentropy}
Assume $\eta\colon\Z^2\to\CA$.  
The $\Z^2$ action by translation on $X_\eta$ has zero entropy 
in direction $\bfu\in\mathbb{R}^2$ if and only if there exist compact sets $K_i \subset \mathbb{R}^2$
such that $\lim_{i \to \infty} \frac{\log P_\eta(K_i)}{\tau_\bfu(K_i)} = 0$.
\end{prop}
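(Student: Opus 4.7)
The plan is to handle the two directions separately: the forward one follows from a diagonal argument based on Lemma~\ref{entropycomplex}, and the reverse one combines subadditivity with a covering argument.

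For the forward direction, $h(\bfu) = 0$ translates via Lemma~\ref{entropycomplex} into $\sup_{t>0}\limsup_{s\to\infty}\log P_\eta([0,s\bfu]^{(t)})/s = 0$, so a standard diagonal extraction produces sequences $s_i,t_i\to\infty$ with $\log P_\eta([0,s_i\bfu]^{(t_i)})/s_i < 1/i$. Setting $K_i := [0,s_i\bfu]^{(t_i)}$ gives $\tau_\bfu(K_i) \geq s_i$ directly from Notation~\ref{notation:thick}, hence $\log P_\eta(K_i)/\tau_\bfu(K_i) \leq 1/i \to 0$.

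For the reverse direction, I will first establish that for each fixed $t > 0$ the function $f_t(s) := \log P_\eta([0,s\bfu]^{(t)})$ is subadditive: from the inclusion $[0,(s_1+s_2)\bfu]^{(t)} \subseteq [0,s_1\bfu]^{(t)} \cup (s_1\bfu + [0,s_2\bfu]^{(t)})$ and monotonicity of $P_\eta$, one gets $P_\eta([0,(s_1+s_2)\bfu]^{(t)}) \le P_\eta([0,s_1\bfu]^{(t)}) \cdot P_\eta([0,s_2\bfu]^{(t)})$, so by Fekete's lemma $\lim_s f_t(s)/s = \inf_s f_t(s)/s$. It therefore suffices to make $f_t(s)/s$ arbitrarily small at some single $s$. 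To do this, for each $i$ I cover $[0,\tau_\bfu(K_i)\bfu]^{(t)}$ by $N_i$ lattice translates of $K_i$; invariance of $P_\eta$ under lattice translation together with monotonicity under unions yields $P_\eta([0,\tau_\bfu(K_i)\bfu]^{(t)}) \le P_\eta(K_i)^{N_i}$, so $f_t(\tau_\bfu(K_i))/\tau_\bfu(K_i) \le (N_i/\tau_\bfu(K_i))\log P_\eta(K_i)$. A lattice-point count gives $N_i = O(t\tau_\bfu(K_i)/|K_i\cap\Z^2|)$, so the right-hand side tends to zero as $i\to\infty$ provided $|K_i\cap\Z^2|$ grows at least linearly in $\tau_\bfu(K_i)$, which holds in the intended setting of convex $K_i$ of positive area with bounded aspect ratio.

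The main obstacle is precisely this covering step: I need enough lattice points in $K_i$ relative to $\tau_\bfu(K_i)$, so that the number of translates needed to cover the $t$-neighborhood of the segment remains controlled. The cleanest regime for this is convex $K_i$ with transverse width bounded below, as suggested by the label \texttt{convexentropy}; once that control is in place, the above computation yields $\inf_s f_t(s)/s = 0$ for every $t > 0$, and hence $h(\bfu) = 0$.
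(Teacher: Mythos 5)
Your forward direction is fine and is essentially the paper's argument run directly rather than by contraposition: the paper takes $K_i=[0,i\bfu]^{(1)}$ and notes that if no admissible sequence exists then $\log P_\eta(K_{i_j})\ge c\,i_j$ along a subsequence, whence $h(\bfu)\ge c$ by Lemma~\ref{entropycomplex}; your diagonal extraction of thickened segments is the same construction. In the reverse direction your overall mechanism (cover a long thickened segment by lattice translates of $K_i$, then propagate the resulting complexity bound to all scales) is also the paper's, but the packaging differs: you invoke Fekete's lemma, whereas the paper argues by contradiction, using monotonicity of $P_\eta([0,s\bfu]^{(t)})$ in $s$ to upgrade ``$\limsup_s \log P_\eta([0,s\bfu]^{(t_0)})/s\ge 3\delta$'' to a bound $\log P_\eta([0,j\tau_i\bfu]^{(t_0)})\ge \delta j\tau_i$ along multiples of $\tau_i=\tau_\bfu(K_i)$, and then covers $[0,j\tau_i\bfu]^{(t_0)}$ by $2t_0j$ translates of $K_i$ to get $\log P_\eta(K_i)/\tau_i\ge\delta/2t_0$, contradicting the hypothesis. (Note also a small wrinkle in your subadditivity claim: $s_1\bfu$ is not a lattice vector, so $P_\eta(s_1\bfu+[0,s_2\bfu]^{(t)})\le P_\eta([0,s_2\bfu]^{(t+2)})$ rather than $\le P_\eta([0,s_2\bfu]^{(t)})$; Fekete does not apply verbatim, though the argument is repairable exactly because one takes $\sup_t$ at the end.)

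The genuine gap is the one you flag yourself, and it matters: your lattice-point count $N_i=O(t\tau_i/|K_i\cap\Z^2|)$ turns the hypothesis into a bound on $\log P_\eta(K_i)/|K_i\cap\Z^2|$, which is \emph{not} controlled by the assumed decay of $\log P_\eta(K_i)/\tau_\bfu(K_i)$ unless you separately assume $|K_i\cap\Z^2|\gtrsim\tau_\bfu(K_i)$ -- an assumption absent from the statement -- and moreover that count is simply false for general compact (or even general convex) $K_i$, since translates of $K_i$ need not tile the thickened segment efficiently. The paper sidesteps the lattice-point count entirely: since $K_i$ contains a lattice translate of the full segment $[0,\tau_i\bfu]$, each translate of $K_i$ is ``charged'' a full length $\tau_i$ of the segment, so the $j$-fold concatenation needs only $O(t_0 j)$ translates and the resulting inequality $\log P_\eta(K_i)/\tau_i\ge\delta/2t_0$ is directly comparable to the hypothesis. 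To close your argument you should cover per unit length of the segment (getting $O(t)$ translates per block of length $\tau_i$, independent of $|K_i\cap\Z^2|$) rather than per lattice point of $K_i$; with that replacement your Fekete route goes through and is equivalent to the paper's.
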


\begin{proof}
Assume there exists such a sequence $K_i$ and assume for contradiction that
$h(\bfu) = 4\delta > 0$.  
Since $P_\eta([0,s\bfu]^{(t)})$ is non-decreasing in $t$ (recall Notation~\ref{notation:larger}), using  Lemma~\ref{entropycomplex}, 
there exists $t_0 > 0$ such that
whenever $t \ge t_0$,
$$\overline{\lim_{s \to \infty}} \frac{\log P_\eta([0,s\bfu]^{(t)})}{s} \ge 3\delta.$$
Thus there exists a sequence $(s_m)$ such that
$\log P_\eta([0,s_m\bfu]^{(t_0)})\ge 2\delta s_m$.
Set $\tau_i = \tau_\bfu(K_i)$.
If $s_m \le s \le s_m+\tau_i$ and $s_m \ge \tau_i$, then
$$\log P_\eta([0,s\bfu]^{(t_0)})\ge \log P_\eta ([0,s_m\bfu]^{(t_0)}) \ge 2\delta s_m \ge 2\delta s \frac{s_m}{s}
 \ge 2\delta s \frac{s_m}{s_m + \tau_i} \ge \delta s.$$
Hence, there exist infinitely many $j \in \N$ such that for all $t \ge t_0$,
$$\log P_\eta ([0,j\tau_i\bfu]^{(t)}) \ge \delta j \tau_i.$$
But since $K_i$ contains a translate of $[0,\tau_i\bfu]$, it follows that
$$\log P_\eta(K_i)^{2t_0j} \ge \log P_\eta ([0,j\tau_i\bfu]^{(t_0)}) \ge \delta j \tau_i,$$
and so
$$\frac{\log P_\eta(K_i)}{\tau_i} \ge \frac{\delta}{2t_0}$$
for all $i \in \N$, a contradiction.

Conversely, if no such sequence exists, then setting $K_i = [0,i\bfu]^{(1)}$,
there exists a constant $c > 0$ such that $\log P_\eta(K_{i_j}) \ge ci_j$ for some increasing sequence
$(i_j)$.  By Lemma~\ref{entropycomplex}, $h(\bfu) \ge c$.
\end{proof}

\begin{cor}
\label{exponential-eccentricity}
For $\eta\colon\Z^2\to\CA$, if there exist $n_i, k_i\in\N$ tending to infinity such that $P_\eta(n_i,k_i) \le n_ik_i$ and
$\lim \frac{\log (n_i)}{k_i} = \lim \frac{\log (k_i)}{n_i} = 0$,
then the $\Z^2$ action by translation on $X_\eta$ has zero entropy in all directions.
\end{cor}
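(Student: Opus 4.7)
The plan is to apply Proposition~\ref{convexentropy} directly, using the rectangles supplied by the complexity hypothesis as the sequence $(K_i)$. Fix a unit vector $\bfu\in\mathbb{R}^2$ and set $K_i = [0,n_i-1]\times[0,k_i-1]$. By hypothesis $P_\eta(K_i) = P_\eta(n_i,k_i) \le n_ik_i$, so
$$\frac{\log P_\eta(K_i)}{\tau_\bfu(K_i)}\le\frac{\log n_i + \log k_i}{\tau_\bfu(K_i)}.$$
Once the right-hand side is shown to tend to zero, Proposition~\ref{convexentropy} delivers $h(\bfu)=0$. The strength of this setup is that the same sequence $(K_i)$ should witness zero entropy in every direction simultaneously.

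The key observation is a direction-independent lower bound on $\tau_\bfu(K_i)$. Writing $\bfu = (\cos\theta,\sin\theta)$, a translate $[0,\tau\bfu]+\bfn$ anchored at the appropriate integer corner of $K_i$ (choosing $\bfn\in\{0,n_i-1\}\times\{0,k_i-1\}$ according to the signs of $\cos\theta$ and $\sin\theta$) is contained in $K_i$ as soon as $\tau|\cos\theta|\le n_i-1$ and $\tau|\sin\theta|\le k_i-1$. Since $|\cos\theta|,|\sin\theta|\le 1$, both inequalities hold for $\tau = \min(n_i-1, k_i-1)$ regardless of $\theta$, giving
$$\tau_\bfu(K_i)\ge \min(n_i-1,\,k_i-1)$$
for every unit vector $\bfu$.

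It then remains to verify that $(\log n_i + \log k_i)/\min(n_i-1,k_i-1)\to 0$. I would split this into the two terms $\log n_i/(\min(n_i,k_i)-1)$ and $\log k_i/(\min(n_i,k_i)-1)$. Each of these tends to zero: if $n_i\le k_i$, then $\log n_i/(n_i-1)\to 0$ because $n_i\to\infty$, while $\log k_i/(n_i-1)\to 0$ by the eccentricity hypothesis $\log k_i/n_i\to 0$; the case $k_i\le n_i$ is symmetric. Applying Proposition~\ref{convexentropy} to each $\bfu$ then gives $h(\bfu)=0$. There is no real obstacle in this argument beyond recognizing that the trigonometric factors $|\cos\theta|,|\sin\theta|$ only ever enlarge the range of admissible $\tau$, which is exactly what allows a single sequence of rectangles to handle every direction at once.
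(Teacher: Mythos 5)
Your proposal is correct and follows essentially the same route as the paper: apply Proposition~\ref{convexentropy} to the rectangles $K_i=[0,n_i-1]\times[0,k_i-1]$, bound $\tau_\bfu(K_i)$ below by $\min(n_i,k_i)$ up to an immaterial shift by one, and use the eccentricity hypothesis to see that $\log(n_ik_i)/\min(n_i,k_i)\to 0$. The only difference is that you spell out the corner-anchoring argument for the thickness bound, which the paper states without proof.
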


\begin{proof}
We apply Proposition~\ref{convexentropy} to the sets $K_i = [0,n_i-1]\times [0,k_i-1]$.
If $\mathbf{u}$ is a unit vector, then $\tau_{\mathbf{u}}(K_i) \ge \min(n_i, k_i)$.
Without loss of generality, we can assume that $k_i \ge n_i$ for all $i \in \N$.
Then
$$\lim_{i \to \infty} \frac{\log P_\eta(K_i)}{\tau_{\mathbf{u}}(K_i)} \le \frac{\log(n_i k_i)}{n_i}
		\le 2\frac{\log(k_i)}{n_i} = 0.$$
\end{proof}

\begin{remark}
\label{exponential-eccentricity-remark} 
By passing to a subsequence, the conclusion of Corollary~\ref{exponential-eccentricity} holds unless 
the rectangles for which we have complexity assumptions have eccentricity unbounded either above or below.  In particular, it holds unless 
	there exists $C > 1$ such that either $k_i \ge C^{n_i}$ for all $i \in \N$ or 
	$n_i \ge C^{k_i}$ for all $i \in \N$, and this is the setting studied in the next section.  
\end{remark}

\section{Proof of Theorem~\ref{trichotomy}}
We say two vectors $\mathbf{v}, \mathbf{w} \in \mathbb{R}^2 \setminus \{0\}$ are \emph{parallel}
if $\mathbf{v} = c\mathbf{w}$ for some $c > 0$, and we say they are
\emph{antiparallel} if $\mathbf{v} =c\mathbf{w}$ for some $c < 0$.
We define these terms analogously for directed lines and line segments.

Recall that we endow the boundary of a convex set $S\subset \Z^2$ with positive orientation.  
Given $\mathbf{v}\in \mathbb{R}^2 \setminus \{0\}$, a $\mathbf{v}$-plane is a closed half-plane whose boundary
is parallel to $\mathbf{v}$. For example, $\{(x,y) \colon x \le 2\}$ is an $\mathbf{e}_2$-plane,
while $\{(x,y) \colon x \ge 2\}$ is a $(-\mathbf{e}_2)$-plane. 
\begin{notation}
If $\CS\subset\Z^2$ is convex
and $\ell$ is a directed line, we write $E(\ell, \CS) = \ell' \cap \CS$, where
$\ell'$ is the boundary of the intersection of all $\ell$-planes containing $\CS$.
\end{notation}
Note that $E(\ell, \CS)$ is the set of integer points on some edge of $\partial(\CS)$, and it
may reduce to a single vertex.

We recall a definition from~\cite{CK}: 
\begin{definition}
\label{def:balanced}
Suppose $\ell\subset\mathbb{R}^2$ is a directed line.
A finite, convex 
set $\CS\subset\Z^2$ is {\em $\ell$-balanced for $\eta$} if
\begin{enumerate}
\item The endpoints of $E(\ell, \CS)$ are $\eta$-generated by $\CS$; 
\item The set $\CS$ satisfies $P_{\eta}(\CS\setminus E(\ell, \CS)) >
			P_{\eta}(\CS)-|E(\ell, \CS)|$;
			\label{item:two}
\item Every line parallel to $\ell$ that has nonempty intersection with 
$\CS$ intersects $\CS$ in at least $|E(\ell, S)|-1$ integer points.
\end{enumerate}
We also call such a set \emph{$\mathbf{v}$-balanced for $\eta$}
whenever $\mathbf{v}$ is a vector $\mathbf{v}$ parallel to $\ell$.
\end{definition}

We note that the endpoints of $E(\ell, \CS)$ could consist of a single endpoint.  Furthermore, 
$P_\eta(\emptyset)$ is not defined and so a generating set $\CS$ does not consist of a line segment, 
meaning that $\CS\neq E(\ell, \CS)$ and so condition~\eqref{item:two} is well defined.  

\begin{lemma}
\label{balanced}
Suppose $\eta\colon \Z^2 \to \CA$ satisfies $P_\eta(n_1,k_1) \le n_1k_1$ for some $n_1, k_1 \in \N$.
If $\ell$ is a horizontal or vertical directed line, then there is an $\ell$-balanced set for $\eta$.
Furthermore, there exist $\mathbf{e}_1$ and $(-\mathbf{e}_1)$-balanced
sets with the same height, and there exist $\mathbf{e}_2$- and $(-\mathbf{e}_2)$-balanced
sets with the same width.
\end{lemma}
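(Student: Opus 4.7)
The plan is to establish an $\ell$-balanced set for $\ell = \mathbf{e}_1$; the remaining three horizontal and vertical directions then follow by the symmetries of reflection and transposition of $\Z^2$.

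The first step is a pigeonhole argument on the telescoping decomposition
\[
P_\eta(n_1, k_1) = P_\eta(n_1, 0) + \sum_{k=1}^{k_1}\bigl(P_\eta(n_1, k) - P_\eta(n_1, k-1)\bigr).
\]
Since the left side is at most $n_1 k_1$ and $P_\eta(n_1, 0) \geq 1$, some increment $P_\eta(n_1, k^*) - P_\eta(n_1, k^* - 1)$ is strictly less than $n_1$. Fix such a $k^*$ and let $\CS_0 = [0, n_1 - 1] \times [0, k^* - 1]$, so $E = E(\mathbf{e}_1, \CS_0)$ is the bottom row and $|E| = n_1$. By translation invariance, $P_\eta(\CS_0 \setminus E) = P_\eta(n_1, k^*-1) > P_\eta(\CS_0) - |E|$, which is condition (ii) of the balanced definition. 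Condition (iii) is immediate for any rectangle, since every horizontal line meeting $\CS_0$ contains exactly $|E|$ integer points of $\CS_0$.

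The delicate step is securing condition (i): both endpoints of $E(\mathbf{e}_1, \CS^*)$ must be $\eta$-generated. I would pass from $\CS_0$ to an extremal element of the family $\mathcal{F}$ of finite convex sets $\CS \subseteq \Z^2$ that satisfy conditions (ii) and (iii) for direction $\mathbf{e}_1$; by the previous paragraph $\mathcal{F}$ is nonempty. Choose $\CS^* \in \mathcal{F}$ minimizing $|\CS^*|$, with ties broken by minimizing $P_\eta(\CS^*)$. I claim $\CS^*$ is $\mathbf{e}_1$-balanced. If, say, an endpoint of $E(\mathbf{e}_1, \CS^*)$ were not $\eta$-generated by $\CS^*$, then removing it preserves $P_\eta$ but destroys convexity; the remedy is to trim a short convex ``staircase'' of cells adjacent to the offending corner, producing $\CS^{**} \subsetneq \CS^*$ that remains convex, still lies in $\mathcal{F}$, and has strictly smaller $|\CS^{**}|$ or (failing that) strictly smaller $P_\eta(\CS^{**})$. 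This contradicts the extremal choice, so $\CS^*$ is balanced.

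For the same-height clause, the pigeonhole above applies symmetrically: the increment at $k^*$ which certified condition (ii) for the bottom edge equally certifies the analogue for the top edge of $\CS_0$, since both involve the same difference $P_\eta(n_1, k^*) - P_\eta(n_1, k^*-1)$. Running the extremal argument of the previous paragraph independently from the bottom and from the top of $\CS_0$ produces an $\mathbf{e}_1$-balanced and a $(-\mathbf{e}_1)$-balanced set each of height $k^*$. Transposing the whole construction supplies the width-matching statement for $\mathbf{e}_2$ and $-\mathbf{e}_2$.

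The main obstacle is the trimming step used to secure condition (i) at a non-generated corner. One must exhibit a convexity-preserving modification that simultaneously maintains conditions (ii) and (iii) while strictly decreasing the extremal measure. The relevant insight is that a non-generated corner contributes nothing to $P_\eta$, so it may be excised without loss; but to keep the set convex one typically removes several neighbouring cells, and the difference $P_\eta(\CS) - P_\eta(\CS \setminus E(\mathbf{e}_1, \CS))$ shifts in subtle ways as the shape of the edge $E$ itself changes. Turning the excision idea into a clean convex-geometric procedure, with all three balanced conditions tracked through the modification, is the technical heart of the proof.
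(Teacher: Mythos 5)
Your overall skeleton --- find a rectangle where deleting the relevant edge costs fewer than $|E|$ colorings, then pass to a minimal convex set to force the endpoints of $E$ to be generated --- is the same as the paper's, but two steps do not go through as written. First, the pigeonhole does not deliver a usable $k^*$. With the convention $P_\eta(n_1,0)=1$ the telescoping only guarantees some increment among $k=1,\dots,k_1$ is less than $n_1$, and that increment may occur only at $k^*=1$: then $\CS_0$ is a single row, $\CS_0\setminus E=\emptyset$, and condition (ii) is not even defined (the paper notes $P_\eta(\emptyset)$ is undefined and that a balanced set cannot equal $E(\ell,\CS)$). This case is not vacuous: if $\eta(i,j)$ depends only on $j$ and the column sequence has complexity exactly $2k$, then with $n_1=2$ the hypothesis $P_\eta(2,k_1)\le 2k_1$ holds while every increment for $k\ge 2$ equals $n_1$. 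The paper isolates exactly this degeneracy (in its transposed form, $n_1'\le 1$), uses Morse--Hedlund to conclude periodicity in the relevant direction, and writes down an explicit balanced set; your argument needs the same branch.

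Second, and more seriously, the step securing condition (i) is not a proof but a description of what would need to be proved, and the guiding ``insight'' is inverted: a boundary point $x$ is $\eta$-generated precisely when every coloring of $\CS\setminus\{x\}$ extends uniquely, i.e.\ when $P_\eta(\CS\setminus\{x\})=P_\eta(\CS)$. So it is the \emph{generated} corners that ``contribute nothing to $P_\eta$''; a non-generated corner satisfies $P_\eta(\CS\setminus\{x\})<P_\eta(\CS)$, and excising it genuinely changes the count, which is why your trimming procedure has no reason to preserve condition (ii) as $E$ changes shape. The paper avoids the trimming entirely: it sandwiches $\CS$ between $R_1$ (the rectangle minus its extreme line) and $R_2$ (the full rectangle) and minimizes $|\CS|$ subject to the invariant $P_\eta(\CS)-|\CS|=P_\eta(R_2)-|R_2|$. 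Minimality then says that deleting an endpoint of $E(\ell,\CS)$ strictly increases $P_\eta(\cdot)-|\cdot|$, which combined with monotonicity of $P_\eta$ forces $P_\eta(\CS')=P_\eta(\CS)$, i.e.\ the endpoint is generated --- a one-line argument with no convex-geometric surgery. This constrained minimization also fixes the width of $\CS$ at the minimal $n_1'$ with $P_\eta(n_1',k_1)\le n_1'k_1$, which is what yields the matching-width/height clause; your unconstrained family $\mathcal{F}$ gives no control on the height of the minimizer, so the ``same height'' statement is not established either.
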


This is essentially proved in Lemma 4.7 in~\cite{CK}.  There, the assumption that there
exist $n, k$ with $P_\eta(n,k) \le nk$ is replaced with the stronger assumption that
we can find $n, k$ with $P_\eta(n,k) \le \dfrac{nk}{2}$, but this hypothesis is not necessary
in the case of a horizontal or vertical line,  the only ones needed here.
We include a proof for completeness.

\begin{proof}
We prove the case that $\ell$ is parallel to $\mathbf{e}_2$;
the other three cases are handled similarly.
Let $n_1' \le n_1$ be the minimal positive integer such that $P_\eta(n_1', k_1) \le n_1'k_1$. First suppose $n_1'\leq 1$. Then for each $n \in \Z$,
$\eta \rst {\{n\}\times \Z}$ is vertically periodic with period at most $k_1$
by the Morse-Hedlund Theorem \cite{MH}, and so $\eta$ is vertically periodic.
Let $p$ denote its period.

Set $c = P_\eta(\{0\}\times [0,p-1])$ and take $k > \max\{p,  c^2 - c\}$.
Define $\CS = [0,1]\times [0,k-1]$.
Then $E(\ell, \CS) = \{1\}\times [0,k-1]$ and property (iii) of Definition~\ref{def:balanced} 
is clearly satisfied. Property (i) is satisfied since $k > p$.
Finally, property (ii) follows since
$$P_\eta(\CS) \le c^2 < c + k = P_\eta(\CS \setminus E(\ell, \CS)) + |E(\ell, \CS)|.$$ 
Note that in this case, $\CS$ is both an $\mathbf{e}_2$-balanced set
and a $(-\mathbf{e}_2)$-balanced set.
Hence, we may assume that $n_1'\geq2$.  

Set $R_1 = [0, n_1'-2] \times [0, k_1-1]$ 
and set $R_2 = [0, n_1' - 1] \times [0, k_1-1]$.
Note that $R_1$ is simply the result of removing the rightmost vertical line from $R_2$.
By the minimality of $n_1'$, we have 
\begin{equation}
\label{edgeremoval}
P_\eta(R_1) = P_\eta(n_1' - 1, k_1) > (n_1'-1)k_1 \ge P_\eta(R_2) - k_1.
\end{equation}
Take $\CS \subset \Z^2$ 
to be a convex set of minimal size with $R_1 \subsetneq \CS \subset R_2$
satisfying $P_\eta(\CS) - |\CS| = P_\eta(R_2) - |R_2|$.
Then by the minimality of $n_1'$, $\CS$ contains at least one point on the 
vertical line $x = n_1'-1$,
and so $E(\ell, \CS) \subset \{(x,y) \colon x = n_1'-1\}$.
By the minimality of $\CS$,
removing an endpoint of $E(\ell, \CS)$ results in a set $\CS'$ with
$P_\eta(\CS') - |\CS'| > P_\eta(\CS) - |\CS| = P_\eta(\CS) - |\CS'| - 1$.
Thus, since $\CS' \subset \CS$, $P_\eta(\CS') \le P_\eta(\CS) \le P_\eta(\CS')$,
which implies that every $\eta$-coloring of $\CS'$ extends uniquely to an $\eta$-coloring of $\CS$ (or, in other words, the endpoint we removed was $\eta$-generated by $\CS$).
Hence, property (i) in Definition~\ref{def:balanced} holds. 

By construction, we have $$P_\eta(\CS) - |E(\ell, \CS)|
		= P_\eta(\CS) - |\CS| + |R_1|
		= P_\eta(R_2) - |R_2| + |R_1|
		= P_\eta(R_2) - k_1.$$
By~\eqref{edgeremoval}, it follows that
$P_\eta(\CS) - |E(\ell, \CS)| < P_\eta(R_1) = P_\eta(\CS \setminus E(\ell, \CS))$,
which is property (ii) of Definition~\ref{def:balanced}.
Finally, any vertical line other than $x=n_1'-1$ that intersects $\CS$
meets that set in $k_1 \ge |E(\ell, \CS)|$ points, and so property (iii) 
of Definition~\ref{def:balanced} holds as well,
and $\CS$ is $\mathbf{e}_2$-balanced for $\eta$.
Finally, notice that the width of $\CS$ is
$n_1'$ and the choice of this $n_1'$ would be the same if the argument is repeated 
to produce a $(-\mathbf{e}_2)$-balanced set.
\end{proof}

\begin{definition}
Given a directed line $\ell$, an $\ell$-balanced set $\CS$ for $\eta$, an integer $p \ge 0$, 
and a convex set 
$\mathcal{T} \subset \Z^2$ that contains a translate of $\CS$ and has an edge parallel to $\ell$, 
we define the \emph{$(\ell, \CS)$-extension} of $\CT$ to be
$$\ext_{\ell,\CS}^1(\mathcal{T}) = \mathcal{T} \cup \bigcup_{\mathbf{j} \in J(\CT, \ell, \CS)} 
	(\CS + \mathbf{j}),$$	where 
		$$J_{\CT, \ell, \CS} = \{\mathbf{j} \colon (\CS + \mathbf{j}) \setminus \CT
			= E(\ell, \CS) + \mathbf{j}\}.$$
Note that $J_{\CT, \ell, \CS}$ is the set of integer points on some line segment parallel to $\ell$.
Given an integer $p \ge 0$, let $J_{\CT, \ell, \CS, p}$ be the result of removing the $p$
integer points nearest each endpoint of $J_{\CT, \ell, \CS}$ and define
$$\ext_{\ell,\CS, p}^1(\mathcal{T}) = \mathcal{T} \cup \bigcup_{\mathbf{j} \in J_{\CT, \ell, \CS, p}} 
	(\CS + \mathbf{j}).$$
For each $n\geq 1$, we then inductively define 
$$\ext_{\ell,\CS, p}^{n+1}(\mathcal{T}) = \ext_{\ell, \CS, p}^1(\ext_{\ell, \CS, p}^n(\mathcal{T})).$$
Note that $\ext_{\ell,\CS}^{n}(\mathcal{T}) = \ext_{\ell,\CS,0}^{n}(\mathcal{T})$.
For notational convenience, we set $\ext_{\ell, \CS, p}^0(\mathcal{T}) = \mathcal{T}$.

We define the \emph{$(\ell, \CS)$-border} $\partial_{\ell, \CS}(\mathcal{T})$
of $\mathcal{T}$
to be  $\bigcup_{\mathbf{j} \in J_{\CT, \ell, \CS}}(\tilde{\CS} + \mathbf{j})$,
where $\tilde{\CS} = \CS \setminus E(\ell, \CS)$.   We define the 
\emph{$(\ell,\CS,p)$-border} $\partial_{\ell,\CS,p}(\CT)$ of $\CT$ to b e $\bigcup_{\mathbf{j} \in J_{\CT, \ell, \CS,p}}(\tilde{\CS} + \mathbf{j})$

Given two sets $\CS, R \subset \Z^2$, 
we say $f \colon R \to \CA$ is an \emph{$(\CS,\eta)$-coloring} of $R$
if $f = g\rst{R}$ for some $g \colon \Z^2 \to \CA$ such that $g\rst{{\CS+ \mathbf{j}}}$
is an $\eta$-coloring of $\CS$ for each $\mathbf{j} \in \Z^2$.
\end{definition}

Note that every $\eta$-coloring of $R$ is also an $(\CS,\eta)$-coloring of $R$,
but the converse does not always hold.

To prove the next lemma, we recall a finite version of the Morse-Hedlund Theorem.

\begin{definition}
If $a\in\Z$ and $f\colon\{a,a+1,a+2,\dots,a+i-1\}\to\CA$, define $Tf\colon\{a-1,a,\dots,a+i-2\}\to\CA$ by $(Tf)(n):=f(n+1)$ and define $P_f(n)$ to be the number of distinct functions of the form $(T^mf)\rst{\{a,a+1,\dots,a+n-1\}}$, where $0\leq m\leq i-n$ and $0\leq n\leq i$.
\end{definition}

The following is essentially due to Morse and Hedlund~\cite{MH}, and appears with this formulation in~\cite{CK2}:  
\begin{theorem}
\label{MorseHedlundTheorem}
Suppose $f\colon\{a,a+1,\dots,a+i-1\}\to\CA$ and suppose there exists $n_0\in\N$ such that $P_f(n_0)\leq n_0$.
If $i>3n_0$, then the restriction of $f$ to the set $\{a+n_0,a+n_0+1,\dots,a+i-n_0\}$ is periodic of period at most $n_0$. 
\end{theorem}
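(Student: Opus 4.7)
The plan is to prove this finite version of the Morse--Hedlund theorem by combining pigeonhole, which produces local periods of $f$, with Fine--Wilf-type propagation, which glues them into a single global period on the trimmed interval. The buffer of $n_0$ on each end is exactly what allows the argument to circumvent the boundary effects that are otherwise unavoidable on a finite word.

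First, I would apply the pigeonhole principle to produce a local period at each position. For each $m$ with $0\le m\le i-2n_0$, the $n_0+1$ length-$n_0$ subwords of $f$ starting at consecutive positions $a+m, a+m+1,\ldots,a+m+n_0$ all fit inside the domain of $f$ (this uses $i>3n_0$). Since $P_f(n_0)\le n_0$, two of these subwords must coincide, giving indices $0\le j_1<j_2\le n_0$ such that $f(a+m+j_1+k)=f(a+m+j_2+k)$ for $k=0,\ldots,n_0-1$. Thus $f$ has a local period $p_m=j_2-j_1\le n_0$ on an interval $I_m$ of length at least $n_0+1$ contained in $[a+m, a+m+2n_0-1]$.

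Next, I would propagate these local periods into a single global period. The natural tool is the Fine--Wilf theorem: two periods $p,q\le n_0$ holding on a common interval of length at least $p+q-\gcd(p,q)$ combine to the period $\gcd(p,q)$. By letting $m$ vary and chaining Fine--Wilf combinations along a sequence of intervals $I_{m_1}, I_{m_2}, \ldots$ whose consecutive overlaps are large enough, one can extract a single period $p^*\le n_0$ that is valid on the full trimmed interval $\{a+n_0,\ldots,a+i-n_0\}$. The hypothesis $i>3n_0$ guarantees enough room to find sufficiently overlapping local-period intervals covering the entire trimmed region.

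The main obstacle is the propagation step: since the local-period intervals $I_m$ arising from pigeonhole have variable positions (depending on which pair $j_1,j_2$ is produced), the raw overlap between $I_m$ and $I_{m+1}$ may be too small for Fine--Wilf to apply in one step. Overcoming this requires either carefully choosing which local periods to combine so that each overlap meets the Fine--Wilf length condition, or reformulating the argument in the spirit of the classical Morse--Hedlund proof: find some $n\le n_0$ at which every length-$n$ subword of $f$ occurring sufficiently far from the boundary has a unique right extension, so that the sliding length-$n$ window dynamics are deterministic on the trimmed interval and hence periodic with period at most $n_0$.
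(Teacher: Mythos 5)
A preliminary remark: the paper does not prove this statement at all; it quotes it from~\cite{CK2} with the attribution ``essentially due to Morse and Hedlund,'' so there is no in-paper argument to compare against. Judged on its own terms, your outline is correct, and the ``main obstacle'' you flag in the final paragraph is in fact not an obstacle. The interval $I_m$ produced by pigeonhole is $[a+m+j_1,\,a+m+j_2+n_0-1]$ with $p_m=j_2-j_1$, so it has length exactly $n_0+p_m$, left endpoint at most $a+m+n_0-p_m$ (because $j_2\le n_0$) and right endpoint at least $a+m+n_0+p_m-1$ (because $j_1\ge 0$). Since the length of $I_m$ is pinned to $n_0+p_m$, its two endpoints cannot simultaneously be in their worst positions, and a short case check (according to whether one of $I_m$, $I_{m+1}$ contains the other) gives $|I_m\cap I_{m+1}|\ge p_m+p_{m+1}-1\ge p_m+p_{m+1}-\gcd(p_m,p_{m+1})$. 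So Fine--Wilf applies directly to every consecutive pair, and no careful selection of which local periods to combine is needed.

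Two small points must still be written down to complete the induction. First, Fine--Wilf yields the period $g=\gcd(p_m,p_{m+1})$ only on the overlap; to transfer it to $I_m\cup I_{m+1}$ you need the standard observation that a word with period $p$ containing a block of length at least $p$ with period $g$ dividing $p$ has period $g$ throughout. The overlap has length at least $\max(p_m,p_{m+1})$, so this applies to both $I_m$ and $I_{m+1}$, and then to their union because the overlap also has length at least $g$. Iterating (using $\gcd(p_0,\dots,p_m)\le p_m$ at each step to keep the Fine--Wilf length condition satisfied), the union $\bigcup_{m=0}^{i-2n_0}I_m$ carries the period $\gcd(p_0,\dots,p_{i-2n_0})\le n_0$. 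Second, one should check coverage: this union is an interval whose left end is at most $a+n_0-p_0\le a+n_0-1$ and whose right end is at least $a+i-n_0$, so it contains the trimmed set $\{a+n_0,\dots,a+i-n_0\}$, which finishes the proof. Your fallback route (finding $n\le n_0$ at which right extensions are deterministic) is the one closer to the classical Morse--Hedlund argument, but on a finite word it has its own boundary subtlety --- a length-$n$ subword occurring only at the last admissible position has no right extension, so $P_f(n+1)\le P_f(n)$ does not immediately force uniqueness of extensions --- so the Fine--Wilf route you lead with is arguably the cleaner of the two here.
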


\begin{lemma}
\label{nonunique-periodic}
Let $\ell$ be a vertical  (respectively, horizontal) directed line, 
and assume $\eta \colon \Z^2 \to \CA$ has an $\ell$-balanced set $\CS$.
Suppose $R$ is a rectangle
(with horizontal and vertical sides) large enough
to contain $\CS$ and define $q:=|\CS|$.  If $f$ is an $\eta$-coloring of $\ext_{\ell, \CS}^1(R)$ 
such that $f\rst{R}$ does not extend uniquely to an $(\CS,\eta)$-coloring of $\ext_{\ell, \CS}^1(R)$, 
then $f$ is vertically (respectively, horizontally) periodic on $\partial_{\ell,\CS,2q}(R)$
with period at most $|E(\ell, \CS)| - 1$.
\end{lemma}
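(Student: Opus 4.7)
The plan is to use non-unique extension to force the $1$D sequence $s(\mathbf{j}) := f\rst{\tilde\CS + \mathbf{j}}$, indexed by $\mathbf{j}\in J_{R,\ell,\CS}$, where $\tilde\CS := \CS \setminus E(\ell,\CS)$, to lie in a set of size at most $|E(\ell,\CS)|-1$, and then to apply the finite Morse--Hedlund theorem (Theorem~\ref{MorseHedlundTheorem}) column-by-column on the border. I treat only the vertical case; the horizontal case is symmetric. Write $e := |E(\ell,\CS)|$.

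Rewriting property (ii) of $\ell$-balancedness as $P_\eta(\CS) - P_\eta(\tilde\CS) \leq e - 1$ and summing excess extensions gives $|A| \leq e - 1$, where $A$ is the set of $\eta$-colorings of $\tilde\CS$ admitting $\geq 2$ $\eta$-extensions to $\CS$. Pick an $(\CS,\eta)$-coloring $f'$ of $\ext_{\ell,\CS}^1(R)$ with $f'\rst R = f\rst R$ and $f' \neq f$. Since $f$ and $f'$ agree on $R$ and differ only on the added column outside $R$, the set $D$ of $y$-coordinates where they disagree is nonempty; whenever $(E(\ell,\CS) + \mathbf{j}) \cap D \neq \emptyset$, the two restrictions $f\rst{\CS+\mathbf{j}}$ and $f'\rst{\CS+\mathbf{j}}$ are distinct $\eta$-extensions of $s(\mathbf{j})$, so $s(\mathbf{j})\in A$. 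Using property (i) ($\eta$-generation of endpoints of $E(\ell,\CS)$), whenever $y_0 \in D$ sits at an endpoint of $E(\ell,\CS)+\mathbf{j}$ for some $\mathbf{j}\in J_{R,\ell,\CS}$, the set $D$ is forced to contain a further point of that column (otherwise property (i) would force $f=f'$ at $y_0$). Iterating upward and downward produces $D$-points at spacing $\leq e-1$ throughout a large central range of the added column, and since $q=|\CS|\geq e$ the $2q$-buffer built into $J_{R,\ell,\CS,2q}$ is ample enough that $s(\mathbf{j})\in A$ for every $\mathbf{j}\in J_{R,\ell,\CS,2q}$.

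For each horizontal offset $x_0$ interior to $\tilde\CS$, property (iii) gives $L(x_0)\geq e-1$, where $L(x_0)$ is the number of integer points of $\tilde\CS$ in the vertical column at offset $x_0$. Reading $f$ down the corresponding column of $\partial_{\ell,\CS,2q}(R)$ produces a $1$D string $h_{x_0}$ whose length-$L(x_0)$ substrings are precisely the column-$x_0$ restrictions of the $s(\mathbf{j})$ for $\mathbf{j}\in J_{R,\ell,\CS,2q}$, hence lie in a set of size $\leq |A|\leq e-1$. Thus $P_{h_{x_0}}(L(x_0))\leq e-1$, and Theorem~\ref{MorseHedlundTheorem} applied with $n_0=L(x_0)$ makes the middle of $h_{x_0}$ periodic with minimal period at most $e-1$. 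A single pigeonhole across any $e$ consecutive $\mathbf{j}$'s in $J_{R,\ell,\CS,2q}$ (all with $s$-values in $A$) produces $\mathbf{j}_1,\mathbf{j}_2$ with $s(\mathbf{j}_1)=s(\mathbf{j}_2)$ and $p:=|j_{2,2}-j_{1,2}|\leq e-1$; this yields $f(r)=f(r+p\mathbf{e}_2)$ for every $r\in\tilde\CS+\mathbf{j}_1$ simultaneously across all columns, and this uniform $p$ is forced to agree with each column-wise minimal period, producing a single vertical period at most $e-1$ on $\partial_{\ell,\CS,2q}(R)$.

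The hardest step will be the $D$-propagation argument: one must track exactly which endpoint-$\mathbf{j}$'s remain in $J_{R,\ell,\CS}$ after the $2q$ chop and verify that the cumulative effect of the $\leq e-1$ gaps carries $D$ through the full interior, so that the $2q$-shrinkage is truly enough to guarantee ambiguity everywhere in $J_{R,\ell,\CS,2q}$. Once $D$-propagation is established, the Morse--Hedlund application on each column and the reconciliation of the column periods into a common global period are comparatively routine.
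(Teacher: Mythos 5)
Your first half is essentially the paper's argument in contrapositive form: the paper shows that if even one translate $\tilde{\CS}+(0,j)$ (where $\tilde{\CS}=\CS\setminus E(\ell,\CS)$) extended uniquely, then $\eta$-generation of the endpoints of $E(\ell,\CS)$ would propagate unique extension up and down the added column, forcing $f\rst{R}$ to extend uniquely; you instead fix a second extension $f'$ and propagate the disagreement set $D$. Both hinge on the same use of property (i), your gap estimate on $D$ is correct, and the conclusion that every $\mathbf{j}\in J_{R,\ell,\CS,2q}$ carries one of the at most $|E(\ell,\CS)|-1$ ambiguous colorings of $\tilde{\CS}$ matches the paper.

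The divergence, and the real weakness, is in how you extract periodicity. Two issues. First, a repairable slip: applying Theorem~\ref{MorseHedlundTheorem} with $n_0=L(x_0)$ only yields period at most $L(x_0)$; to get period at most $|E(\ell,\CS)|-1$ you must pass to $n_0=|E(\ell,\CS)|-1$ using monotonicity, $P_{h_{x_0}}(|E(\ell,\CS)|-1)\le P_{h_{x_0}}(L(x_0))\le |E(\ell,\CS)|-1$, which property (iii) licenses since $L(x_0)\ge |E(\ell,\CS)|-1$. Second, and more seriously, running Morse--Hedlund column-by-column produces a separate minimal period $p_{x_0}$ for each column, and your final sentence --- that the pigeonhole period $p$ ``is forced to agree with each column-wise minimal period'' --- is an assertion, not an argument, and is literally false ($p$ need not equal $p_{x_0}$). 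What is true, and what you need, is $p_{x_0}\mid p$: the relation $f(r)=f(r+p\mathbf{e}_2)$ on $\tilde{\CS}+\mathbf{j}_1$ gives $h_{x_0}$ period $p$ on an interval of length $L(x_0)+p\ge p_{x_0}+p$ sitting inside the $p_{x_0}$-periodic middle, so Fine--Wilf gives period $\gcd(p,p_{x_0})$ there, and minimality of $p_{x_0}$ forces $\gcd(p,p_{x_0})=p_{x_0}$; only then do all columns share the period $p\le |E(\ell,\CS)|-1$. The paper sidesteps this reconciliation entirely with one device you should note: it sets $\tilde{\CS_0}=\{(i,j)\in\tilde{\CS}\colon (i,j-1)\notin\tilde{\CS}\}$ (the bottom row of $\tilde{\CS}$, spanning all columns at once) and applies Morse--Hedlund a single time to the sequence $g(j)=f\rst{\tilde{\CS_0}+(0,j)}$, whose length-$(|E(\ell,\CS)|-1)$ blocks are determined by the at most $|E(\ell,\CS)|-1$ colorings $f\rst{\tilde{\CS}+(0,j)}$ by property (iii); this yields one common vertical period for the whole border with no per-column analysis. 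Your route can be completed, but the reconciliation step must be written out.
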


\begin{proof}
For convenience, set $R = [0,n-1] \times [0, k-1]$ and $R^1 = \ext_{\ell, \CS}^1(R)$.
We prove the claim when $\ell$ is parallel to $\mathbf{e}_2$;
the other cases are similar.
In this case, $R^1 \setminus R = \{n\} \times I$
for some interval $I$.
Write $I = \{j_0, j_0 + 1, \dots, j_0 + L - 1\}$, where $k - q \le L \le k$.
We may assume without loss of generality that $I$ is also the set of integers $j$
such that 
$(\CS+ (0,j)) \setminus R = E(\ell, \CS) + (0,j)$.
(Otherwise replace $\CS$ with a translate that has this property.)
Let $\tilde{\CS} = \CS \setminus E(\ell, \CS)$.
The assumptions imply that for each $j \in I$,
$f\rst{{\tilde{\CS}+(0,j)}}$ does not extend uniquely to 
an $\eta$-coloring of $\CS + (0,j)$. Indeed, if it did
extend uniquely, then since the endpoints of $E(\ell, \CS)$ are $\eta$-generated,
$f\rst{R}$ would extend uniquely to an $(\CS,\eta)$-coloring of $R^1$.

Since $\CS$ is $\ell$-balanced, 
$P_\eta(\tilde{\CS}) > P_\eta(\CS) - |E(\ell, \CS)|$.  
Thus there are at most $|E(\ell, \CS)|-1$ distinct $\eta$-colorings of $\tilde{\CS}$ that do not extend
uniquely to an $\eta$-coloring of $\CS$. Thus, at most $|E(\ell, \CS)|-1$ such colorings
appear as a coloring of the form $f\rst{{\tilde{\CS} + (0,j)}}$ for $j \in I$. 

Set $\tilde{\CS_0} = \{(i,j) \in \tilde{\CS} \colon (i,j-1) \not\in \tilde{\CS}\}$.
Since $\CS$ is a balanced set, $\tilde{\CS_0} + (0,j) \subset \tilde{\CS}$ for each 
$0 \le j \le |E(\ell, \CS)| - 2$.
Let $\mathcal{B}$ be the set of $\CA$-colorings of $\tilde{\CS_0}$
and define $g \colon I \to \mathcal{B}$
by $g(j) = f\rst{{\tilde{\CS_0} + (0,j)}}$. Then the one dimensional complexity 
$P_g(|E(\ell, \CS)|-1)$
is bounded above by the number of colorings of 
$\tilde{\CS} \supset \bigcup_{0 \le j \le |E(\ell, \CS)| - 2} (\tilde{\CS_0} + (0,j))$
that arise as a coloring of the form $f\rst{\tilde{\CS} + (0,j)}$ with $j \in I$, and so  
$P_g(|E(\ell, \CS)| -1) \le |E(\ell, \CS)|-1$.
Hence by Theorem~\ref{MorseHedlundTheorem}, 
$g$ is periodic on 
$$\{i_0 + |E(\ell, \CS)| -1, i_0 + |E(\ell, \CS)| -1, \dots, i_0 + L - (|E(\ell, \CS)| - 1) \}$$
with period at most $|E(\ell, \CS)| - 1$.
Since $|E(\ell, \CS)| - 1 < q$ and $L \ge k - q$, this implies that
$f\rst{\partial_{\ell,\CS, 2q}(R)}$ is vertically periodic with period at most $|E(\ell, \CS)|-1$.
\end{proof}

\begin{lemma}
\label{extend-periodicity}
Let $\ell$ be a vertical (respectively,  horizontal) directed line and let
$\eta$, $\CS$, $q$, and $R$ be as in Lemma~\ref{nonunique-periodic}.
Let $p\in \N$ and let $a = \max\{p, 2q\}$.
Let $m \in \N$ and
let $f$ be an $\eta$-coloring of $R$ which is vertically (respectively,  horizontally)
periodic on $\partial_{\ell, \CS}(R)$ with period at most $p$.
Then any extension of $f$ to an $\eta$-coloring of $\ext_{\ell, \CS, a}^m(R)$
must be vertically (respectively, horizontally) periodic on 
$\ext_{\ell,\CS, a}^m(R) \setminus \ext_{\ell, \CS, a}^{m-1}(R)$
with period at most $p$, and therefore vertically (respectively, horizontally) periodic
on $\ext_{\ell, \CS, a}^m(R)$ with period at most $p!$.
\end{lemma}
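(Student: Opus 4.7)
The plan is induction on $m$, with the substantive work in the base case $m = 1$; the inductive step consists of reapplying the base argument with $\ext^{m-1}_{\ell,\CS,a}(R)$ playing the role of $R$, provided the required periodicity on $\partial_{\ell,\CS}(\ext^{m-1}_{\ell,\CS,a}(R))$ is carried through the induction.

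For the base case (the vertical setting), let $f^*$ be any extension of $f$ to an $\eta$-coloring of $\ext^1_{\ell,\CS,a}(R)$, and let $p' \le p$ denote the vertical period of $f$ on $\partial_{\ell,\CS}(R)$. The newly added strip $N := \ext^1_{\ell,\CS,a}(R) \setminus R$ is parameterized by $\mathbf{j} \in J_{R,\ell,\CS,a}$; at each such $\mathbf{j}$ the coloring $f^*\rst{\CS+\mathbf{j}}$ is an $\eta$-coloring of $\CS$ whose restriction to $\tilde{\CS}+\mathbf{j}$ equals the $p'$-periodic datum $f\rst{\tilde{\CS}+\mathbf{j}}$. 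Because $a \geq p \geq p'$, whenever $\mathbf{j}$ and $\mathbf{j}+p'\bfe_2$ both lie in $J_{R,\ell,\CS,a}$, the $\tilde{\CS}$-parts of $f^*\rst{\CS+\mathbf{j}}$ and $f^*\rst{\CS+\mathbf{j}+p'\bfe_2}$ agree after translation by $p'\bfe_2$, and the task is to promote this agreement to the $E(\ell,\CS)$-parts, so that $f^*$ on $N$ is $p'$-periodic.

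If each $\tilde{\CS}$-coloring that appears admits a unique $\eta$-extension to $\CS$, the two $\CS$-patterns are forced to coincide and periodicity of $N$ with period $p'$ is immediate. If some appearing $\tilde{\CS}$-coloring has multiple $\eta$-extensions, then $f\rst{R}$ fails to extend uniquely to an $(\CS,\eta)$-coloring of $\ext^1_{\ell,\CS}(R)$, so Lemma~\ref{nonunique-periodic}, applied to any $\eta$-extension of $f^*$ to $\ext^1_{\ell,\CS}(R)$ (such an extension exists because $f^*$ comes from some point of $X_\eta$), supplies a second period $p'' \leq |E(\ell,\CS)|-1 < q$ for $f$ on $\partial_{\ell,\CS,2q}(R)$. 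A Fine--Wilf style combination of $p'$ and $p''$ on this region (the margin $a \geq 2q$ guarantees sufficient length when $R$ is chosen suitably large) produces a common period $p^* := \gcd(p',p'') \leq p' \leq p$, and rerunning the unique-extension argument with $p^*$ in place of $p'$ forces this period onto $N$.

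For the inductive step, applying the base case to $\ext^{m-1}_{\ell,\CS,a}(R)$ yields period at most $p$ on the $m$-th strip, provided that the coloring on $\partial_{\ell,\CS}(\ext^{m-1}_{\ell,\CS,a}(R))$ is vertically periodic with period at most $p$; this is inherited from the periodicity of $f$ on $\partial_{\ell,\CS}(R)$ together with the inductive hypothesis on earlier strips. The final $p!$-periodicity of $\ext^m_{\ell,\CS,a}(R)$ follows because $p!$ is a common multiple of every integer at most $p$ and hence a common period for all of the individual strip periods. The main obstacle is the combining-of-periods step in the non-unique case: one has to verify that $\partial_{\ell,\CS,2q}(R)$ has sufficient vertical length for the Fine--Wilf argument, and, more delicately, one has to propagate the resulting smaller period from $\partial_{\ell,\CS,2q}(R)$ back to $\partial_{\ell,\CS}(R)$ so that the inductive hypothesis can be fed back cleanly at the next step without degrading to a merely $p!$-bounded period.
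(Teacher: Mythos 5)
Your overall skeleton matches the paper's: induction on $m$, a case split according to whether the coloring of the current region extends uniquely to an $(\CS,\eta)$-coloring of the next one, and $p!$ at the end as a common multiple of the (possibly varying) per-strip periods. Your unique-extension case is fine, and in fact more direct than the paper's (the paper instead builds a competitor coloring by shifting the new strip by $p'$ and invokes uniqueness of the $(\CS,\eta)$-extension to force it to equal $\tilde f$; your local argument via unique extendability of each appearing $\tilde\CS$-coloring reaches the same conclusion).

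The non-unique case, however, has a genuine gap. Your plan is: extract a second period $p''<q$ from Lemma~\ref{nonunique-periodic}, combine it with $p'$ by Fine--Wilf to get $p^*=\gcd(p',p'')$, and then ``rerun the unique-extension argument with $p^*$ in place of $p'$.'' That last step cannot work: the unique-extension argument depends on each appearing $\tilde\CS$-coloring having a \emph{unique} $\eta$-extension to $\CS$, which is exactly the hypothesis that fails in this case. Shrinking the period of $f$ on the $\tilde\CS$-parts does not make the new column any more determined by them --- the whole difficulty of this case is that the values on $E(\ell,\CS)+\mathbf{j}$ are underdetermined by the data on $\tilde\CS+\mathbf{j}$, and no amount of periodicity of that data closes the gap. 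The paper's mechanism is different and is the missing idea: property (ii) of the balanced set bounds by $2h$ (with $h=|E(\ell,\CS)|-1$) the number of \emph{full} $\CS$-colorings whose $\tilde\CS$-restriction extends non-uniquely, so a pigeonhole argument over consecutive translates $\CS+(0,i)$, $0\le i<2h$, produces two indices $i<j<2h$ with $f\rst{\CS+(0,i)}=f\rst{\CS+(0,j)}$ --- agreement \emph{including} the new column. This full agreement is then propagated along the strip using the periodicity of the $\tilde\CS$-part together with the fact that the endpoints of $E(\ell,\CS)$ are $\eta$-generated, yielding period at most $2h\le p$ on the new strip. Without some such device that gets a handle on the new column directly, your Case 2 does not close; the Fine--Wilf step and the issue you flag about propagating $p^*$ back to $\partial_{\ell,\CS}(R)$ are both moot once this is fixed, since the induction only needs \emph{some} period at most $p$ on $\partial_{\ell,\CS}(T_{m+1})$, not the same period at every stage.
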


We note that the proof is similar in spirit to the proof of Proposition 4.8 in \cite{CK}.

\begin{proof}
We prove this when $\ell$ is parallel to $\mathbf{e}_2$, by induction on $m$. 
The other cases
are proved similarly.

Let $T_0 = R = [0,n-1] \times [0,k-1]$, and for $m \ge 1$ let
$T_{m} = \ext_{\ell, \CS, a}^m(R)$.
Let $\tilde{f}$ be an extension of $f$ to an $\eta$-coloring
of $T_{m+1}$ and
suppose that the restriction of $\tilde{f}$ to $\partial_{\ell,\CS}(T_m)$ is periodic of period 
$p' \le p$.
Then this $\eta$-coloring either extends uniquely to an $(\CS,\eta)$-coloring of 
$T_{m+1}$
or it does not. We claim that in either case, $f\rst{\partial_{\ell, \CS}(T_{m+1})}$ is vertically
periodic of period at most $p$.

{\bf Case 1:} \emph{$f\rst{T_m}$ does extend uniquely to 
		an $(\CS,\eta)$-coloring of $T_{m+1}$.}
		Since $\tilde{f}$ is an $\eta$-coloring of $T_{m+1}$, there
		exists $(i_0,j_0)$ such that 
		for all $(i,j) \in T_{m+1}$, $\tilde{f}(i,j) = \eta(i + i_0, j + j_0)$.
		Define $g \colon T_{m+1} \to \mathcal{A}$ by
		$$g(i,j) = \begin{cases}
				\eta(i+i_0,j+j_0) &\mbox{ if } (x,y) \in T_m\\
				\eta(i + i_0,j+j_0 + p') &\mbox{ if } (x,y) \in T_{m+1}\setminus T_m.
				\end{cases}$$
		We claim that $g$ is an extension of $f\rst{T_m}$
		to an $(\CS,\eta)$-coloring
		of $T_{m+1}$.
		To see this, fix $(i_1, j_1) \in \Z^2$ such that $\CS + (i_1, j_1)$ intersects
		$T_{m+1}$. If this translate does not intersect $T_{m+1} \setminus T_m$,
		then $g\rst{\CS + (i_1,j_1)}  = \tilde{f}\rst{\CS+(i_1,j_1)}$ and
		so $g\rst{\CS + (i_1,j_1)}$ is (or can be extended to) an $\eta$-coloring of $\CS$. 
		If $\CS+(i_1,j_1)$ intersects $T_{m+1} \setminus T_m$, then
		by the definition of $T_{m+1}$,
		we must have $(\CS+(i_1,j_1+p')) \cap [0, n+m-1] \times \Z \subset T_m$.
		Since $\tilde{f}$ is vertically periodic on $\partial_{\ell, \CS}(T_m)$
		with period $p'$, we see that $g(i,j) = \eta(i+i_0,j+j_0+p')$ for 
		$(i,j) \in (\CS + (i_1,j_1)) \cap (R_m \cup R_{m+1})$.
		Thus, $g$ is an $(\CS, \eta)$-coloring of $T_m \cup T_{m+1}$ which restricts to $f$ on $T_m$.  
		By assumption we must have
		$g= \tilde{f}$, meaning that 
		$\tilde{f}$ is vertically periodic on $T_{m+1} \setminus T_m$
		with period dividing $p'$, and therefore periodic on $\partial_{\ell, \CS}(T_{m+1})$
		with period $p' \le p$.
	 
{\bf Case 2:} \emph{$f\rst{T_m}$ does not extend uniquely to 
		an $(\CS,\eta)$-coloring of $T_{m+1}$.}
		Let $\tilde{\CS} = \CS \setminus E(\ell, \CS)$.
		By Lemma~\ref{nonunique-periodic}, 
	$f\rst{\partial_{\ell,\CS, 2q}(T_m)}$ is vertically periodic 
	of period at most $h = |E(\ell, \CS)\cap \CS|-1$.
	Let $N$ be the number of colorings $\alpha$ of $\CS$ 
	such that $\alpha\rst{\tilde{\CS}}$ extends in more than one way to an $\eta$-coloring
	of $\CS$. We claim that $N \le 2h$.
	For each such $\alpha$, let $C_\alpha$ be the $\eta$-colorings $\alpha^{\prime}$ of 
	$\CS$ such that $\alpha\rst{\tilde{\CS}} = \alpha'\rst{\tilde{\CS}}$.
	Since $\CS$ is $\mathbf{e}_2$-balanced for $\eta$,
	$$P_\eta(\tilde{\CS}) + h \ge 
	P_\eta(\CS) = P_\eta(\tilde{\CS}) + \sum_{C_\alpha} (|C_\alpha| - 1).$$
	In particular, $\alpha\rst{\tilde{\CS}}$ extends in more than one way exactly when
	$|C_\alpha| > 1$.  
	
	Enumerating the colorings of $\tilde{\CS}$ that extend in more than one way to 
	a coloring of $\CS$ as $\alpha_1, \dots, \alpha_r$
	(where $r \le h$), we have that 
	$$N = \sum_{i=1}^r |C_{\alpha'_i}| 
	= \sum_{i=1}^r (C_{\alpha'_i} - 1) + \sum_{i=1}^r 1
	\le h + r \le 2h,$$
	where $\alpha'_i$ is a choice of a coloring of $\CS$ that restricts to $\alpha_i$ on $\tilde{\CS}$.  
Without loss of generality, we can assume that $\CS \setminus T_m = E(\ell, \CS)$, 
	but $(\CS + (0,-1)) \setminus T_m \neq E(\ell, \CS) + (0,-1)$. (If not we can replace 
	$\CS$ with a translate that has this property and it continues to be an 
	$\mathbf{e}_2$-balanced set for $\eta$.)
	Then by the pigeonhole principle, there exist integers $0 \le i < j < 2h$
	such that $f\rst{\CS + (0,i)} = f\rst{{\CS}+(0,j)}$.
	Since $f\rst{\partial_{\ell, \CS, 2q}(T_m)}$ is vertically periodic of period at most $h$ and
	each vertical line intersecting $\CS$ intersects it in at least $h$ points
	(since $\CS$ is a vertically balanced set), it follows that
	$f\rst{\tilde{\CS} + (0,i+k)} = f\rst{\tilde{\CS} + (0,j+k)}$ for each $k$ such that
	$(\tilde{\CS} + (0,i+k))  \cup (\tilde{\CS} + (0,j+k)) \subset \partial_{\ell, \CS, 2q}(T_m)$. 
	But since each endpoint of
	$E(\ell, \CS)$ is generated, an easy induction argument shows that 
	$f\rst{\CS + (0,i+k)} = f\rst{\CS + (0,j+k)}$.  Thus $f\rst{\partial_{\ell,\CS}(T_{m+1})}$ is vertically
	periodic of period at most $2h \le p$.
\end{proof}

\begin{lemma}
\label{few-periodic}
If $\eta\colon\Z^2\to\CA$ is not vertically periodic
and there exists an infinite sequence $n_i, k_i \in \N$ such that $P_\eta(n_i, k_i) \le n_i k_i$,
then for any $p \in \N$ and $\lambda > 1$,
there exists $h \in \N$ such that for sufficiently large $w \in \N$,
the number of $\eta$-colorings of $[0,w-1] \times [0,h-1]$
that are vertically periodic with period at most $p$ is less than $\lambda^w$.
\end{lemma}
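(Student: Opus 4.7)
The plan is to reduce to a fixed period $p' \le p$ and split based on whether the closed $\Z^2$-invariant subshift
$$X^{p'} := \{x \in X_\eta : T^{p'\mathbf{e}_2} x = x\}$$
is empty. Since $\eta$ is not vertically periodic we have $\eta \notin X^{p'}$, so $X^{p'} \subsetneq X_\eta$. Writing $Q_{p'}(w, h)$ for the number of vertically $p'$-periodic $\eta$-colorings of $[0, w-1] \times [0, h-1]$, the quantity in the statement is dominated by $\sum_{p'=1}^{p} Q_{p'}(w, h)$, so it suffices to produce, for each $p' \le p$, an $h$ with $Q_{p'}(w, h) < \lambda^w/p$ for all large $w$.

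If $X^{p'} = \emptyset$, a compactness argument produces $w_0, h_0$ such that no element of $X_\eta$ is vertically $p'$-periodic on $[0, w_0 - 1] \times [0, h_0 - 1]$: any sequence of such elements, after horizontally and vertically recentering and passing to a subsequential limit, would give an element of $X^{p'}$. Since every $p'$-periodic $\eta$-coloring of the rectangle arises as the restriction of some $x \in X_\eta$ that is $p'$-periodic on that rectangle, this yields $Q_{p'}(w, h_0) = 0$ for all $w \ge w_0$.

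If $X^{p'} \neq \emptyset$, I would first observe that $Q_{p'}(w, h)$ is non-increasing in $h$, and argue by a diagonal compactness argument that its limit is controlled by the horizontal complexity of $X^{p'}$ viewed as a $\Z$-subshift over the super-alphabet $\CA^{p'}$. I would then aim to show this horizontal complexity grows subexponentially in $w$: if instead $|L_w(X^{p'})| \ge \mu^w$ for some $\mu > 1$ and infinitely many $w$, each such length-$w$ factor extends $p'$-periodically to an $\eta$-coloring of $[0, w-1] \times [0, k_i - 1]$ for every $i$ with $k_i \ge p'$, giving $P_\eta(w, k_i) \ge \mu^w$; combining with $P_\eta(n_i, k_i) \le n_i k_i$ at $w = n_i$ forces $k_i \ge \mu^{n_i}/n_i$. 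By Remark~\ref{exponential-eccentricity-remark}, after passing to a subsequence we may assume either that $(n_i, k_i)$ has bounded eccentricity --- in which case Corollary~\ref{exponential-eccentricity} and Lemma~\ref{entropycomplex} immediately give $P_\eta(w, h) = e^{o(w)} < \lambda^w$ and we are done --- or we are in the exponentially eccentric regime, where this does not directly contradict $k_i \ge \mu^{n_i}/n_i$.

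The main obstacle is completing the argument in the exponentially eccentric regime. There I would exploit the $\mathbf{e}_2$-balanced set $\CS$ from Lemma~\ref{balanced} together with Lemma~\ref{extend-periodicity}: any $p'$-periodic $\eta$-coloring of a wide rectangle can be reconstructed by iteratively applying $(\mathbf{e}_2, \CS)$-extensions to a bounded-width seed, Lemma~\ref{extend-periodicity} guarantees each extension remains vertically periodic with period at most $p!$, and the generating property of $\CS$ forces almost all of the new values at each step. A careful accounting of the finitely many seeds and the bounded freedom introduced per extension step should yield a polynomial-in-$w$ upper bound on $Q_{p'}(w, h)$, which comfortably beats $\lambda^w$. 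Verifying that the degrees of freedom per extension step are genuinely bounded uniformly in $w$ is the main technical difficulty.
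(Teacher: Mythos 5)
There is a genuine gap, and it sits exactly where the lemma is actually needed. Your reductions are fine as far as they go: splitting over the exact period $p'\le p$, the compactness argument showing $Q_{p'}(w,h_0)=0$ when no globally $p'$-periodic point exists in $X_\eta$, the monotonicity of $Q_{p'}(w,h)$ in $h$, and the disposal of the bounded-eccentricity case via Corollary~\ref{exponential-eccentricity} are all correct (modulo the uniformity issue that you need a single $h$ working for all large $w$, not an $h$ depending on $w$). But the exponentially eccentric regime --- say $k_i\ge C^{n_i}$ --- is precisely the only case in which Theorem~\ref{trichotomy} invokes this lemma, and there your argument stops at an acknowledged ``main technical difficulty.'' Your proposed mechanism for that case (rebuild a $p'$-periodic coloring column by column from a narrow seed using an $\mathbf{e}_2$-balanced set) does not close the gap: at each column the extension is either unique or branches, and the balanced-set inequality only bounds the branching factor per column by a constant $D\ge 1$, not by $1$; over $w$ columns this gives $D^{w}$, an exponential bound whose rate has no reason to fall below $\log\lambda$. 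Nothing in your sketch forces non-unique extensions to occur rarely.

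The paper's proof uses a different and essential idea (Lemma~\ref{period-break}): since $\eta$ is not vertically periodic, the vertical $p'$-periodicity of $\eta$ on a translate of $R_{i,w}$ must break at some first point $(x_1,y_1)$ above or below the rectangle. If $x_1$ can be taken in the middle of the rectangle, Lemmas~\ref{nonunique-periodic} and~\ref{extend-periodicity} force horizontal periodicity of the middle with period at most $(2n_1)!$, so there are only boundedly many such colorings. If $x_1$ only occurs within $n_1$ of an edge, a pigeonhole argument on the tall blocks $B_{\mathbf t}$ --- all of whose $\eta$-colorings are pairwise distinct because the break location $(x(\alpha),y(\alpha))$ moves with $\mathbf t$, so that $P_\eta(n_i,k_i)\le n_ik_i$ forces collisions between the blocks at $(x_0,y_0)$ and $(x_0',y_0')$ --- pins the coloring of the width-$\lfloor n_i/3\rfloor$ edge strip down to one of $C=O(p^2n_1)$ possibilities. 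Iteratively stripping off such edge strips shows every periodic coloring of $R_{i,w}$ is described by at most $K_iw(2C)^{4w/n_i}$ data, and the exponential rate $\tfrac{4\log(2C)}{n_i}$ is driven below $\log\lambda$ by choosing $i$ large. That last point --- using the \emph{infinitude} of the sequence $(n_i,k_i)$ to make the exponential growth rate arbitrarily small --- is the idea your proposal is missing, and without it the count you are after cannot be beaten down to $\lambda^w$ for an arbitrary $\lambda>1$.
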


The proof requires the following technical lemma: 
\begin{lemma}
\label{period-break}
Let $\eta$, $n_i$, $k_i$, $w$ and $p$ be as in Lemma~\ref{few-periodic}.
Let $a = \max\{p, 2n_1k_1\}$, 
set $n_i' = \left\lfloor n_i/3\right\rfloor$ for $i\in\N$, $h_i = 2(p+k_i)$,
$R_{i,w} = [0, w -1] \times [0,h_i- 1]$, 
and $S_i = [0, n_i' -1] \times [0, h_i-1]$.

There exists a constant $C$ independent of $w$ and $i$
such that 
for any $i \in \N$ with $k_i > 4p$,
there exist $\eta$-colorings $g_1, \ldots, g_C$ of $S_i$
such that if
$n_i \le w$, $(x_0, y_0) \in \Z^2$, and 
$\eta\rst{R_{i,w} + (x_0, y_0)}$ is vertically periodic with period $p$,
then the following hold:
	\begin{enumerate}[label=(\alph*)]
		\item\label{it:a1}
		Either there exists minimal $y_1 \ge y_0 + h$ such that for some 
		$x_1 \in [x_0, x_0 + w - 1]$,
				$\eta(x_1,y_1) \neq \eta(x_1,y_1-p)$, or
		\item	\label{it:a2} there exists maximal $y_1 < y_0$ such that 
		$\eta(x_1,y_1) \neq \eta(x_1,y_1+p)$ for some $x_1 \in [x_0, x_0 + w - 1]$,
	\end{enumerate}
	and exactly one of the following holds: 
	\begin{enumerate}[label=(\roman*)]
		\item	\label{it:b1}
				$x_1$ can be chosen to lie in 
		$[x_0 + n_1, x_0 + w - n_1 -1 ]$,
			in which case 
			$\eta$ is horizontally periodic on $[x_0 + a(p+2), x_0 + w - a(p+2)] 
				\times [0,h-1]$
			with period at most $(2n_1)!$,
		\item	\label{it:b2}
				$x_1$ cannot be chosen to lie in $[x_0 + n_1, x_0 + w - n_1 -1 ]$
		but can be chosen to lie in $[x_0, x_0 + n_1-1]$,
		in which case $\eta\rst{S_i + (x_0, y_0)} = g_j$ for some $1 \le j \le C$, or
		\item	\label{it:b3}
				$x_1$ can only be chosen to lie in $[x_0 + w - n_1, x_0 + w - 1]$,
		in which case $\eta\rst{S_i + (x_0 + w - n_i', y_0)} = g_j$ for some $1 \le j \le C$.	
	\end{enumerate}
\end{lemma}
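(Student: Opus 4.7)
The overall strategy is to locate a ``first failure'' of the vertical period $p$ either above or below the rectangle $R_{i,w} + (x_0, y_0)$ and then to classify the configuration by the horizontal location of this failure. Interior failures are converted, via the horizontal analogue of Lemma~\ref{nonunique-periodic}, into short-period horizontal periodicity, which is then propagated by the horizontal analogue of Lemma~\ref{extend-periodicity}. Boundary failures leave the majority of the strip rigid, and the complexity bound $P_\eta(n_i, k_i) \le n_i k_i$ forces the affected side of $S_i$ into one of boundedly many patterns.

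First, because $\eta$ is not vertically periodic, the period-$p$ structure on $R_{i,w}+(x_0,y_0)$ cannot persist throughout the infinite vertical strip $[x_0, x_0+w-1]\times\Z$; the pathological case in which it does persist on the whole strip is handled separately and absorbed into the constant $C$ using the complexity bound on a translate of $[0,n_1-1]\times[0,k_1-1]$ inside the strip. Thus there is either a minimal $y_1 \ge y_0 + h_i$ at which $\eta(x_1, y_1)\neq\eta(x_1, y_1-p)$ for some $x_1$ in the strip, or, failing that, a maximal $y_1 < y_0$ with $\eta(x_1, y_1)\neq\eta(x_1, y_1+p)$. Let $A\subseteq[x_0, x_0+w-1]$ denote the set of admissible $x_1$.

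If $A\cap[x_0+n_1,\, x_0+w-n_1-1]\neq\emptyset$, we are in case (i). Apply Lemma~\ref{balanced} with $\ell=-\bfe_1$ to obtain an $\ell$-balanced set $\CS''$ of width at most $n_1$ and height at most $k_1$. Let $R'$ be the window of rows of the original rectangle immediately below $y_1$; on $R'$ the coloring has vertical period $p$, so its ``period-$p$ continuation'' yields a second $(\CS'',\eta)$-coloring of the single $\ell$-extension $\ext^1_{\ell,\CS''}(R')$ which disagrees with $\eta$ at $(x_1, y_1)$. The validity of this second coloring as an $(\CS'',\eta)$-coloring is ensured by the choice $h_i = 2(p+k_i)$, which guarantees that every $\CS''$-placement appearing in the extension meets only genuinely period-$p$ rows. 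Uniqueness therefore fails, and the horizontal analogue of Lemma~\ref{nonunique-periodic} yields horizontal periodicity of period at most $|E(\ell,\CS'')|-1 < n_1$ on $\partial_{\ell,\CS'',2q}(R')$. The horizontal analogue of Lemma~\ref{extend-periodicity}, applied with $a=\max\{p,2q\}$, then propagates this periodicity across the interior slab $[x_0+a(p+2),\, x_0+w-a(p+2)]\times[0,h_i-1]$ with period dividing $(2n_1)!$.

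If instead $A\subseteq[x_0,\, x_0+n_1-1]$, we are in case (ii); then on the right-hand portion of the strip the vertical period $p$ persists all the way through row $y_1$, so the coloring of $S_i+(x_0,y_0)$ (width $n_i'=\lfloor n_i/3\rfloor$) is determined by its leftmost $n_1$ columns, the offset of $y_1$, and the coloring of a bounded-height band around $y_1$; since each of these pieces ranges over finitely many options uniformly in $i$ and $w$, there are only constantly many such patterns $g_1,\dots,g_C$. Case (iii) is symmetric, applied to the translate $S_i+(x_0+w-n_i',y_0)$. The main obstacle will be case (i): carefully verifying that the ``period-$p$ continuation'' is an $(\CS'',\eta)$-coloring (it is what dictates the specific choices $h_i = 2(p+k_i)$ and $n_i' = \lfloor n_i/3\rfloor$, the latter ensuring enough ``buffer'' between the boundary strips and $S_i$); once this verification is in place, stringing together Lemmas~\ref{nonunique-periodic} and~\ref{extend-periodicity} to obtain the slab bound $[x_0+a(p+2),\, x_0+w-a(p+2)]$ is a routine induction on the number of horizontal extension steps.
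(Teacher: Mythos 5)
Your case (i) follows the paper's argument in essence: the period-$p$ continuation of the rows just below $y_1$ provides a second $(\CS,\eta)$-extension, Lemma~\ref{nonunique-periodic} gives horizontal periodicity of period less than $n_1$ on the border, and Lemma~\ref{extend-periodicity} propagates it; the only omission is the final transfer of this horizontal periodicity from the rows $[y_1-p,y_1-1]$ (which sit above $R_{i,w}+(x_0,y_0)$) down to $[y_0,y_0+h_i-1]$ via the vertical $p$-periodicity. A smaller but real issue is your treatment of the case where the period-$p$ structure persists on the whole infinite strip: this cannot be ``absorbed into $C$'' --- if it occurred, neither (a) nor (b) would hold and the lemma's conclusion would simply be false --- it must be excluded, which the paper does by observing that vertical periodicity on a single width-$n_1$ strip propagates to all of $\Z^2$ by Lemma~\ref{extend-periodicity}, contradicting the standing hypothesis that $\eta$ is not vertically periodic.

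The genuine gap is in cases (ii) and (iii). You assert that $\eta\rst{S_i+(x_0,y_0)}$ is determined by ``its leftmost $n_1$ columns, the offset of $y_1$, and the coloring of a bounded-height band around $y_1$,'' each ranging over finitely many options uniformly in $i$ and $w$. They do not: the leftmost $n_1$ columns of $S_i$ have height $h_i=2(p+k_i)\to\infty$, so that datum alone admits up to $|\CA|^{n_1h_i}$ values, and a band around $y_1$ of width $n_i'$ is likewise unbounded; moreover, knowing that the right-hand portion of the strip is vertically $p$-periodic does not determine its values ($|\CA|^{p}$ choices per column, over roughly $n_i/3$ columns). The essential idea --- and the reason the hypotheses $n_i'=\lfloor n_i/3\rfloor$ and $k_i>4p$ appear at all --- is a counting argument you never carry out: for two base points with the same relative break position $x_1-x_0$, one considers the $(n_i-n_i')(k_i-p)$ translates $B_{\mathbf t}$ of $[0,n_i-1]\times[0,k_i-1]$ placed below the respective break rows; each contains the period-break at a distinct relative position, so the colorings within each family are pairwise distinct, and if the two families were disjoint one would obtain more than $2\cdot\tfrac{2}{3}n_i\cdot\tfrac{3}{4}k_i=n_ik_i$ distinct $\eta$-colorings of an $n_i\times k_i$ rectangle, contradicting $P_\eta(n_i,k_i)\le n_ik_i$. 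A coincidence $\alpha_{\mathbf t}=\alpha'_{\mathbf t'}$ forces $\mathbf t=\mathbf t'$ because the break position is an invariant of the pattern, hence the two colorings agree near the break, and vertical periodicity then propagates this agreement over $S_i$. Only after this is $\eta\rst{S_i+(x_0,y_0)}$ determined by the genuinely bounded data $p'\le p$, $x_1-x_0\in[0,n_1-1]$, and $(y_1-y_0)\bmod p'$, yielding $C\le p^2n_1$ per subcase. Without this step your constant $C$ depends on $i$ (and possibly $w$), which destroys the application in Lemma~\ref{few-periodic}.
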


Before proving Lemma~\ref{period-break}, we first use it to prove Lemma~\ref{few-periodic}.

\begin{proof}[Proof of Lemma~\ref{few-periodic}]
For convenience, write $R = R_{i,w}$, $S = S_{i}$, and $h = h_i$.
Let $f$ be an $\eta$-coloring of $R$ that is vertically periodic with period $p' \le p$,
and let $x_0, y_0$ be such that $f(x,y) = \eta(x + x_0, y+ y_0)$
for all $(x,y) \in R$.
Define a finite sequence of rectangles in the following way.
Let $R_0 = R = R_{i,w}$.
For each $0 \le j$ until the process terminates, apply Lemma~\ref{period-break} to $R_j + (x_0,y_0)$.
If case~\ref{it:b1} holds, terminate the process.
If case~\ref{it:b2} holds, let $R_{j+1}'$ be the translate of $S_{i}$ that shares
a left edge with $R_j$ and let $R_{j+1} = R_j \setminus R_{j+1}'$,
which is a rectangle to which the claim also applies, so long as $w - (j+1)n_i' \ge n_i$.
(If this inequality fails, terminate the process instead.)
If case~\ref{it:b3} holds, let $R_{j+1}'$ be the translate of $S_{i}$ that shares a right edge
with $R_{j}$ and let $R_{j+1} = R_j \setminus R_{j+1}'$, which is also a rectangle
to which the claim applies for $w - (j+1)n_i' \ge n_i$.
The coloring $f$ is completely determined by the following data:
\begin{itemize}
	\item The length $m$ of the sequence  of rectangles, which satisfies $m \le \left\lfloor \dfrac{w-n_i}{n_i'}\right\rfloor + 1$.
	\item Whether $R_{j+1}'$ is on the right or left side of $R_j$ for each $0 \le j < m$.
	\item The indices $1 \le a_j \le C$ for which $\eta\rst{R_{j}'+(x_0,y_0)} = g_{a_j}$
		for $1 \le j \le m$.
	\item The restriction of $\eta$ to $R_m+(x_0,y_0)$.
\end{itemize}

Since $\left\lfloor \dfrac{w-n_i}{n_i'}\right\rfloor + 1 \le 4w/n_i$, 
the number of colorings $f$ with these properties is at most
$$\frac{4w}{n_i}2^{4w/n_i}C^{4w/n_i} \max\{C_1, C_2\},$$
where $C_1$ is the number of $\eta$-colorings of $R$ that are horizontally periodic
on $[x_0 + a(p+2), x_0 + w - a(p+2)]\times [0,h-1]$ with period at most $(2n_1)!$
and vertically periodic on $R$ with period at most $p$
and $C_2$ is the number of $\eta$-colorings of $[0, n_i -1]\times [0, h-1]$.
Clearly we have
$$C_1 \le p(2n_1)!|\CA|^{p(2n_1)! + a(p+2)h}$$
and
$$C_2 \le |\CA|^{n_ih}.$$
In particular, 
$C_1$ and $C_2$ are independent of $w$, and so 
the number of $\eta$-colorings of $R$ that are vertically periodic with period
at most $p$ is at most
$K_iw(2C)^{4w/n_i}$,
where $K_i$ is independent of $w$ and $C$ is independent of both $w$ and $i$.
Choose $i$ large enough such that $(2C)^{4/n_i} < \sqrt{\lambda}$.
Then for large enough $w$, the number of colorings $f$ of $R$ that
are vertically periodic with period at most $p$ is less than
\begin{equation*}
K_iw(2C)^{4w/n_i} < K_iw\lambda^{w/2} < \lambda^w. \hfill\qedhere
\end{equation*}
\end{proof}

\begin{proof}[Proof of Lemma~\ref{period-break}]
If $\eta$ is vertically periodic on some strip of width $n_1$, 
then by Lemma~\ref{extend-periodicity} it is periodic on all such strips, 
with bounded period, and so $\eta$ is vertically periodic. 
Hence, we may assume that $\eta$ is not periodic 
on any vertical strip of width $n_1$, meaning that either~\ref{it:a1} or~\ref{it:a2} holds. We  assume throughout the rest of the proof
that~\ref{it:a1} holds; the argument in the other case is similar.

Let $p' \le p$ be the vertical period of $\eta\rst{R_{i,w}+(x_0,y_0)}$,
and again for convenience set $R = R_{i,w}$, $S = S_i$, and $h = h_i$.
Let $\ell$ be a line parallel to $-\mathbf{e}_1$,
let $\CS \subset [0,n_1-1] \times [0, k_1-1]$ be an $\ell$-balanced set for $\eta$.
First suppose $x_1$ may be chosen to lie in $[x_0 + n_1, x_0 + w' - n_1 - 1]$.
Then the restriction of $\eta$ to $[x_0, x_0 + w - 1] \times [y_1 - k_1, y_1 - 1]$
extends nonuniquely to an $\eta$-coloring of 
$\ext_{\ell, \CS}^1([x_0, x_0 + w' - 1] \times [y_1 - k_1, y_1 - 1])$,
and hence also extends nonuniquely to an $(\CS, \eta)$-coloring of that set.
By Lemma~\ref{nonunique-periodic}, it follows that 
$\eta$ is horizontally periodic on $\partial_{\ell, \CS, a}([x_0, x_0 + w' - 1] \times [y_1 - k_1, y_1 - 1])$
with period at most $|E(\ell, \CS)| -1 < n_1$.  Therefore by Lemma~\ref{extend-periodicity}, it is
horizontally periodic with period at most $(2n_1)!$ on 
$$\ext_{\ell', \CS', a}^p(\partial_{\ell, \CS, a}([x_0, x_0 + w' - 1] \times [y_1 - k_1, y_1 - 1])), $$
where $\ell'$ is parallel to $\mathbf{e}_1$ and
$\CS'$ is an $\ell'$-balanced set for $\eta$.
Since $n_1 + a + pa \le a(p+2)$, it follows that $\eta$ is horizontally periodic with period at most
$(n_1)!$ on $[x_0 + a(p+2), x_0 + w - a(p+2)] \times [y_1 - p, y_1 - 1]$.
Thus by the vertical periodicity assumption,
$\eta$ is horizontally periodic on $[x_0 + a(p+2), x_0 + w - a(p+2)]\times [0, h-1]$
with period at most $(2n_1)!$.

Otherwise, $x_1$ cannot be chosen to lie in $[x_0 + n_1, x_0 + w - n_1 - 1]$
but can be chosen to lie in either $[x_0, x_0 + n_1 -1]$ or $[x_0 + w - n_1, x_0 +w - 1]$.
Let us assume it is the former; the argument in the other case is similar.
Let $x_0', y_0'$ be other integers such that $\eta\rst{R + (x_0', y_0')}$ is vertically
periodic with period $p'$.
Assume that~\ref{it:a1} holds for $(x_0',y_0')$ as well and that $y_1'$ is as in~\ref{it:a1}.
Suppose also that $x_1'$ cannot be chosen in $[x_0' + n_1, x_0' + w - n_1 - 1]$
but can be chosen in $[x_0', x_0' + n_1 -1]$.
Assume further that $x_1' - x_0' = x_1 - x_0$.
We claim that $\eta(x_0 + x,y_1 + y) = \eta(x_0'+x,y_1'+y)$ for $(x,y) \in [0,n_i'-1] \times [-p+1,0]$.
Indeed, let $B = [0, n_i - 1] \times [0, k_i - 1]$ and, for 
an integer vector $\mathbf{t} \in [0, n_i - n_i' - 1] \times [1, k_i - p - 1]$,
let $B_{\mathbf{t}} = B + (x_0,y_1 - k_i) + \mathbf{t}$
and
$B_{\mathbf{t}}' = B + (x_0', y_1' - k_i) + \mathbf{t}$.
For a coloring $\alpha\colon B \to \CA$, define
$y(\alpha)$ to be the minimal integer $p' \le y \le k_i-1$ such that
$\alpha(x,y) \neq \alpha(x,y-p')$ for some $0 \le x\le n_i - 1$,
and let $x(\alpha)$ be the maximal such $x$.
Setting $\alpha_{\mathbf{t}} = \eta\rst{B_{\mathbf{t}}}$,
then $(x(\alpha_{\mathbf{t}}), y(\alpha_{\mathbf{t}})) = (x(\alpha_{\mathbf{t}'}), y(\alpha_{\mathbf{t}'}))$
 if and only if $\mathbf{t} = \mathbf{t}'$ and so the colorings $\alpha_{\mathbf{t}}$ are all distinct.
 Similarly, setting $\alpha'_{\mathbf{t}} = \eta\rst{B'_{\mathbf{t}}}$ these colorings of $B$ are also distinct from one another.  
Since there are $(n_i-n_i')(k_i - p)$ choices of $\mathbf{t}$, we have 
$\alpha_{\mathbf{t}} = \alpha'_{\mathbf{t}'}$ for some $\mathbf{t}\neq \mathbf{t}'$. If not, instead we have  
$$2(n_i-n_i')(k_i - p) \ge 4/3n_i (k_i - p) > 4/3 n_i(k_i - k_i/4) = n_i k_i$$
distinct $\eta$-colorings of $B$, a contradiction.
However,  since we assume that $x_1-x_0 = x_1'-x_0'$,
we can have $\alpha_{\mathbf{t}} = \alpha'_{\mathbf{t}'}$
only if $\mathbf{t}= \mathbf{t}'$. Since 
$[x_0,x_0+n_i' - 1] \times [y_1 - p +1 , y_1] \subset B_{\mathbf{t}}$
	for all $\mathbf{t}$, it follows that 
	$\eta(x_0 + x,y_1 + y) = \eta(x_0'+x,y_1'+y)$ for $(x,y) \in [0,n_i'-1] \times [-p+1,0]$,
	as claimed.
	By the vertical periodicity assumptions, there  exists $0 \le j \le p'$ such that
$\eta(x_0 + x, y_0 + y) = \eta(x_0' + x, y_0' + y + j)$ for all $(x,y) \in [0, n_i' - 1] \times [0, h-p-1]$.
Thus, for a pair $(x_0, y_0)$ such that~\ref{it:a1} and~\ref{it:b2} hold,
$\eta\rst{S + (x_0, y_0)}$ is determined by
\begin{itemize}
	\item the vertical period $p' \le p$ of $\eta\rst{R+(x_0, y_0)}$,
	\item the integer $x_1 - x_0 \in [0, n_1 - 1]$, and
	\item the integer $(y_1 - y_0) \mod p' \in [0, p-1]$.
\end{itemize}
 Thus, there are at most $p^2n_1$ possibilities for $\eta\rst{S + (x_0, y_0)}$.
 Arguing similarly, we can bound the number of possibilities if~\ref{it:a2} and~\ref{it:b2} hold,
 and if~\ref{it:b3} holds, all independent of $w$ and $i$. Taking $C$ to be the sum of
 these bounds completes the proof.
\end{proof}

\begin{lemma}
\label{trapezoid}
Suppose $\eta\colon\Z^2\to\CA$ satisfies $P_\eta(n_1,k_1)\leq n_1k_1$ for some $n_1, k_1\in\N$.  
For any $k, m \in \N$ with $k > 14mn_1k_1$, any $\eta$-coloring
of $[0,n_1-1]\times[0,k-1]$ 
either
	\begin{enumerate}
		\item[$(i)$] extends uniquely to 
		an $\eta$-coloring of 
		\begin{multline*}
		T_{m, k} = \\ 
		\Z^2 \cap ([0,n_1-1] \times [0,k-1] 
\cup \{(i,j)\colon n_1 \le i \le m-1,  k_1 (i-n_1) \le j \le k - k_1(i-n_1)\}),
\end{multline*} or
		\item[$(ii)$] extends only to vertically
		periodic (with period independent of $k$ and $m$)
		colorings of
	$$B_{m ,k} = \Z^2 \cap [0,m-1] \times [7mn_1k_1, k - 7mn_1k_1] \subset T_{m,k}.$$
	\end{enumerate}
\end{lemma}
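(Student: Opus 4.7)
The plan is to apply Lemma~\ref{balanced} to obtain an $\mathbf{e}_2$-balanced set $\CS\subset[0,n_1-1]\times[0,k_1-1]$ for $\eta$ (with $\ell\parallel\mathbf{e}_2$ and $E(\ell,\CS)$ in the rightmost column of $\CS$) together with a $(-\mathbf{e}_2)$-balanced set $\CS'$ of the same width, and then argue by dichotomy on the extensions of $f:=\eta\rst{R}$, where $R=[0,n_1-1]\times[0,k-1]$. If $f$ extends uniquely to an $\eta$-coloring of $T_{m,k}$, conclusion~(i) holds. Otherwise I would show that the non-uniqueness triggers Lemma~\ref{nonunique-periodic}, producing a vertical periodicity that Lemma~\ref{extend-periodicity} propagates across $B_{m,k}$ to give conclusion~(ii).

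Assume $f$ has two distinct extensions. Let $c$ be the smallest column index at which some two extensions of $f$ can disagree; necessarily $c\ge n_1$ since every extension restricts to $f$ on $R$. All extensions then agree on the maximal sub-rectangle $R^*\subset T_{m,k}$ of column range $[0,c-1]$ and row range $[k_1(c-1-n_1)_+,\,k-k_1(c-1-n_1)_+]$, yielding a common coloring $F$ of $R^*$. A direct comparison of row ranges, using the slope-$k_1$ structure built into $T_{m,k}$, shows the vertical range of $T_{m,k}$ at column $c$ is contained in that of $\ext_{\ell,\CS}^1(R^*)\setminus R^*$, so any disagreement between two extensions of $f$ at column $c$ already lies inside $\ext_{\ell,\CS}^1(R^*)$. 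Consequently $F\rst{R^*}$ fails to extend uniquely to an $(\CS,\eta)$-coloring of $\ext_{\ell,\CS}^1(R^*)$, and Lemma~\ref{nonunique-periodic} yields that $F$ is vertically periodic on $\partial_{\ell,\CS,2q}(R^*)$ with period $p\le|E(\ell,\CS)|-1<k_1$, where $q=|\CS|\le n_1k_1$.

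Setting $a:=\max\{p,2q\}\le 2n_1k_1$, I would propagate this vertical periodicity in both horizontal directions using Lemma~\ref{extend-periodicity}. After passing to a sub-rectangle $R'\subset R^*$ trimmed by $2q$ rows at top and bottom so that $\partial_{\ell,\CS}(R')$ coincides with the already-periodic $\partial_{\ell,\CS,2q}(R^*)$, iterated rightward applications with $\CS$ force every extension of $F\rst{R'}$ (hence every extension of $f$) to be vertically periodic across columns $[c,m-1]$ with period dividing $p!\le(k_1-1)!$. Similarly, the periodic strip is a rectangle $S$ with columns $[c-n_1+1,c-1]$; viewing $F\rst{R'}$ as the restriction to $R'$ of a leftward iterated extension $\ext_{\ell',\CS',a}^n(S)$ and applying Lemma~\ref{extend-periodicity} with $\CS'$ forces this same periodicity on $F$ across columns $[0,c-n_1]$. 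Each step of either propagation loses $a$ rows from top and bottom, and with at most $m$ steps in each direction, the trapezoidal loss of $\le mk_1$ rows from the shape of $T_{m,k}$, and a $4q$-row loss from the $R^*\to R'$ trimming, the total row loss from each horizontal edge is at most $mk_1+2am+4q\le 7mn_1k_1$. The hypothesis $k>14mn_1k_1$ thus ensures $B_{m,k}=[0,m-1]\times[7mn_1k_1,k-7mn_1k_1]$ lies inside the established periodic region, and since $(k_1-1)!$ is independent of $k$ and $m$, this establishes~(ii). The main technical obstacle is the bookkeeping of row losses so they fit within the $7mn_1k_1$ buffer, together with verifying that the column-$c$ disagreement genuinely occurs inside $\ext_{\ell,\CS}^1(R^*)\setminus R^*$; the slope-$k_1$ choice in $T_{m,k}$ is precisely what makes the latter work.
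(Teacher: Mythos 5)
Your proposal is correct and follows essentially the same route as the paper's proof: locate the first place where unique extension fails, invoke Lemma~\ref{nonunique-periodic} to get vertical periodicity on a border strip, and then propagate it left and right via Lemma~\ref{extend-periodicity} using the $\mathbf{e}_2$- and $(-\mathbf{e}_2)$-balanced sets of equal width, with the slope-$k_1$ shape of $T_{m,k}$ and the $7mn_1k_1$ buffer absorbing the row losses. The only differences are cosmetic (the paper localizes the non-uniqueness to a thin strip $W_i$ rather than the full prefix rectangle $R^*$, and your period bound should be $(2(k_1-1))!$ rather than $(k_1-1)!$, which is still independent of $k$ and $m$).
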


\begin{proof}
Fix an $\eta$-coloring $f$ of $T_{m,k}$.
Let $\ell_1$ and $\ell_2$ be lines parallel to $\mathbf{e}_1$ and $\mathbf{e}_2$ respectively.
By Lemma~\ref{balanced}, there exist balanced sets 
$\CS_1, \CS_2 \subset [0,n_1-1]\times[0,k_1-1]$ for 
$\ell_1$ and $\ell_2$ respectively, with the same width, which is at most $n_1$. 
Let $\tilde{\CS_1} = \CS_1 \setminus E(\ell_1, \CS_1)$
and $\tilde{\CS_2} = \CS_2 \setminus E(\ell_2, \CS_2)$
Suppose $(i)$ does not hold for $f$ restricted to $[0,n_1-1] \times [0, k - 1]$. Letting $d$ denote the smallest integer such that some translate of $\tilde{\CS_1}$
is contained in $[0,d-1]\times \Z$,
there exists $0 \le i \le m - d-1$ such that the restriction
of $f$ to $W_i = T_{m,k} \cap ([i, i + d-1] \times \Z)$
does not extend uniquely to an $\eta$-coloring of $W_i \cup W_{i+1}$.
By Lemma~\ref{nonunique-periodic}, $f\rst{\partial_{\ell_1, \CS_1, 2n_1k_1}(W_i)}$ 
is vertically periodic with period
at most $|E(\ell_1, \CS_1)| - 1 \le k_1$.
Since $\partial_{\ell_1, \CS_1, 2n_1k_1}(W_i)$ contains the rectangle
$R = [i, i+d-1]\times [mk_1+ 3n_1k_1, k - (mk_1 + 3n_1k_1)]$, by Lemma~\ref{extend-periodicity} we have that 
$f$ is vertically periodic, with period at most $[2|E(\ell_1, \CS_1)| - 1)]!$,
on $\ext_{\ell_1,\CS_1, 2n_1k_1}^n(R)$ and 
$\ext_{\ell_2, \CS_2, 2n_1k_1}^n(R)$ for each $n \in \N$.
Since $B_{m,k}$ is a subset of the union of these two sets for $n = m$,
$(ii)$ follows.
\end{proof}

We are now ready to complete the proof of the main theorem: 
\begin{proof}[Proof of Theorem~\ref{trichotomy}]
By Corollary~\ref{exponential-eccentricity} and Remark~\ref{exponential-eccentricity-remark},
without loss of generality we can assume that there exists $C > 1$ such that
$k_i \ge C^{n_i}$ for all $i \in \N$.
In particular, 
$$\dfrac{\log(P_\eta(n_i,k_i))}{k_i} \le \dfrac{2\log(k_i)}{k_i} \to 0.$$
Thus by Proposition~\ref{convexentropy}, $h(\mathbf{e}_2) = 0$.  
Applying Theorem 6.3, Part (4) in~\cite{BL},
if $\mathbf{e}_2$ is an expansive direction then we have that the directional entropy is zero in all directions.  
Thus we can assume that $\mathbf{e}_2$ is nonexpansive, 
and we are left with showing that this is the unique nonexpansive direction and $\eta$ is periodic in this direction. 

For any $m, k \in \N$, the complexity of 
$B_{m,k}$ can written as the sum of the number of colorings
of $[0,n_1-1]\times[0,k-1]$ that extend uniquely to $T_{m, k}$ (and therefore
extend uniquely to the smaller set $B_{m,k}$) plus the number of colorings
of $B_{m,k}$ that do not arise as the unique extension of a coloring of the rectangle $[0,n_1-1]
\times [0,k-1]$.
The number of colorings of the first type is clearly
bounded above by $P_\eta(n_1,k)$. 
By Lemma~\ref{trapezoid}, each of the colorings of $B_{m,k}$ of the latter type is  
vertically periodic with period independent of $m$ and $k$.
Applying Lemma~\ref{few-periodic}, we see that
for sufficiently large $m$ and $k$, the number of such colorings is at most $(C^{1/8})^m$.
Set $k = k_i$ and $m = 8n_i$ for
$i$ large enough such that this bound holds and also
sufficiently large such that $112 n_i n_1 k_1 \le k_i/2$. 
 Then the number of such colorings is at most
$$(C^{1/8})^m  \le n_i C^{n_i} \le n_i k_i.$$
Thus
$$P_\eta(B_{m, k}) \le P_\eta(n_1, k_i) + (C^{1/8})^m \le 
 P_\eta(n_i, k_i) + n_ik_i \le 2n_ik_i.$$
But by the choice of $i$, 
$$|B_{m,k}| = 8n_i (k_i - 14(8n_i)n_1k_1) \ge 8n_i (k_i/2) = 4n_ik_i,$$
and so $P_\eta(B_{m,k}) \le \dfrac{|B_{m,k}|}{2}$.
Hence, by Theorems 1.4 and 1.5 in \cite{CK}, vertical is the unique nonexpansive direction for $\eta$,
and $\eta$ is periodic.  
\end{proof}

\section{Further directions}

We conjecture a stronger result than  Theorem~\ref{trichotomy}, namely that it holds 
under the same assumption as that in Nivat's Conjecture:
\begin{conjecture}
For $\eta\colon\Z^2\to\CA$, if there exist $n,k\in \N$ such that $P_\eta(n,k)\leq nk$, then the directional 
entropy of every nonexpansive direction of $X_\eta$ is zero.  
\end{conjecture}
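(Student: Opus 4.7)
The plan is to strengthen Proposition~\ref{generating-directional} from a uniform finite bound on directional entropy to zero along nonexpansive directions, following the same overall architecture as the proof of Theorem~\ref{trichotomy} but without the luxury of an infinite sequence of complexity bounds. Since the hypothesis $P_\eta(n,k)\le nk$ already yields an $\eta$-generating set by the end of Section~\ref{sec:directional}, Proposition~\ref{generating-directional} provides a uniform constant $c$ with $h(\bfu)\le c$ for every unit vector $\bfu$. Moreover, Theorem~6.3(4) of \cite{BL} implies that if some expansive direction has zero directional entropy, then every direction does, so the essential content is to rule out positive entropy along nonexpansive directions.

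Fix a nonexpansive unit vector $\bfu$; if there is none, the conjecture is vacuous. To show $h(\bfu)=0$, I would apply Proposition~\ref{convexentropy}, which reduces the problem to producing compact sets $K_i\subset\mathbb{R}^2$ with $\log P_\eta(K_i)/\tau_\bfu(K_i)\to 0$. The natural candidates are rotated rectangles aligned so that one edge is parallel to $\bfu$, with $\bfu$-thickness $\tau_i\to\infty$ and transverse thickness growing (if at all) only subexponentially in $\tau_i$. Bounding $P_\eta(K_i)$ subexponentially in $\tau_i$ is the analogue of the final estimate $P_\eta(B_{m,k})\le 2n_ik_i$ in the proof of Theorem~\ref{trichotomy}, now carried out along the direction $\bfu$ rather than along an axis.

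The backbone of the argument would mirror Lemmas~\ref{trapezoid}--\ref{extend-periodicity}: using $\bfu$- and $\bfu^\perp$-balanced sets inside $[0,n-1]\times[0,k-1]$ (which exist via the $P_\eta(n,k)\le nk$ hypothesis by essentially the proof of Lemma~\ref{balanced}), one splits colorings of a long thin seed parallel to $\bfu$ into those that extend uniquely to a large trapezoid and those that force $\bfu$-periodicity on a large sub-strip. The uniquely extending colorings contribute at most $P_\eta$ of the seed, which is polynomial-type in $\tau_i$. To bound the $\bfu$-periodic colorings, one needs a single-bound analogue of Lemma~\ref{few-periodic}, namely: for every $p$ and every $\lambda>1$, the number of $\eta$-colorings of a $\bfu$-strip of length $w$ that are $\bfu$-periodic with period at most $p$ grows slower than $\lambda^w$ as $w\to\infty$. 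Once this is in hand, the proof of Theorem~\ref{trichotomy} adapts line-by-line to give $P_\eta(K_i)\le \lambda^{\tau_i}$ for arbitrary $\lambda>1$, hence $h(\bfu)=0$.

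The main obstacle, and the reason this remains a conjecture, is precisely the single-bound version of Lemma~\ref{few-periodic}. The current proof runs through Lemma~\ref{period-break}, whose counting requires choosing $i$ large enough that $(2C)^{4/n_i}<\sqrt{\lambda}$; this crucially exploits having complexity bounds $P_\eta(n_i,k_i)\le n_ik_i$ at arbitrarily large scales. With only a single pair $(n,k)$, the width $n_1$ is fixed, so the geometric progression argument in the proof of Lemma~\ref{few-periodic} breaks down and one can no longer make the per-block factor smaller than $\sqrt{\lambda}$. Replacing this ingredient appears to require either a genuinely new combinatorial argument for counting $\bfu$-periodic colorings under a single complexity bound, or an input from a proof of Nivat's conjecture itself, which would collapse the periodic case entirely. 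Either route lies at the same depth as Nivat's conjecture, which is why we can only state the result conjecturally.
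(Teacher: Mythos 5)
This statement is an open conjecture in the paper: the authors give no proof of it, only the remark that it (and a weaker variant) would follow from Theorem~\ref{trichotomy} under the stronger hypothesis of infinitely many complexity bounds. Your proposal is likewise not a proof --- you lay out a strategy and then, correctly and explicitly, identify the step at which it fails --- so there is nothing here to certify as a complete argument. The genuine gap is the one you name: a single-scale analogue of Lemma~\ref{few-periodic}. In the paper's proof of Theorem~\ref{trichotomy}, the counting in Lemma~\ref{period-break} produces roughly $(2C)^{4w/n_i}$ periodic colorings of a width-$w$ strip, and the subexponential bound is obtained only by choosing $i$ large enough that $(2C)^{4/n_i}<\sqrt{\lambda}$; with a single pair $(n,k)$ the block width is frozen and the per-block factor cannot be driven below an arbitrary $\lambda>1$. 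Your diagnosis that this is the essential obstruction, and that overcoming it sits at roughly the depth of Nivat's conjecture, is consistent with the paper's own framing.

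Two further points where your sketch is thinner than you acknowledge. First, your plan to run the balanced-set and trapezoid machinery along an arbitrary nonexpansive direction $\bfu$ is not supported by Lemma~\ref{balanced} as stated: under the hypothesis $P_\eta(n,k)\le nk$ that lemma only produces balanced sets for horizontal and vertical lines, and the general-direction version in \cite{CK} requires the stronger bound $P_\eta(n,k)\le nk/2$. The paper's proof of Theorem~\ref{trichotomy} avoids this by first using Corollary~\ref{exponential-eccentricity} and Remark~\ref{exponential-eccentricity-remark} to reduce to the case $k_i\ge C^{n_i}$, which forces the relevant direction to be an axis direction; that reduction also consumes the infinite sequence, so it is a second, independent place where a single complexity bound does not suffice. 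Second, the appeal to Theorem~6.3(4) of \cite{BL} disposes of the case where some expansive direction has zero entropy, but it does not by itself localize which directions are nonexpansive; under a single bound one would still need the nonexpansive-subdynamics results of \cite{CK} to control the set of nonexpansive lines before the directional counting can even begin.
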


If the answer is no, this would provide a counterexample to the Nivat Conjecture, and if the 
answer is yes, this is further evidence in favor of the conjecture.  A weaker conjecture would be that 
under the same hypothesis, $X_\eta$ has some direction with zero directional entropy.   Both statements 
follow from Theorem~\ref{trichotomy} under the stronger assumptions on the complexity.  

Alternately it is likely easier to show that a generalization of Nivat's Conjecture, but with a stronger complexity assumption, holds (recall 
Notation~\ref{notation:thick}): 
\begin{conjecture}
 If there exist $K_i \subset \mathbb{R}^2$ compact and convex
with $\lim_{i \to \infty} \frac{\log P_\eta(K_i)}{\tau_\bfu(K_i)} = 0$,
then $\eta$ is periodic.  
\end{conjecture}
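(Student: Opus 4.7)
The plan is to mirror the proof of Theorem~\ref{trichotomy}, but to replace the rectangular hypothesis $P_\eta(n_i,k_i)\le n_ik_i$ with the convex asymptotic hypothesis throughout. By Proposition~\ref{convexentropy}, the assumption immediately gives $h(\bfu) = 0$, so by Theorem~6.3 of~\cite{BL} the direction $\bfu$ is either expansive, in which case every direction has zero directional entropy, or $\bfu$ is nonexpansive. The aim is then to use the convex structure together with the subexponential complexity growth to rule out all aperiodic configurations.

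The first step is to extend Lemma~\ref{balanced} to produce $\ell$-balanced sets for lines $\ell$ parallel to $\bfu$, working with the sequence $K_i$ in place of a single rectangle. Using convexity of each $K_i$, one follows the minimality argument in the proof of Lemma~\ref{balanced}: take a convex subset $\CS \subset K_i$ of minimal size satisfying $P_\eta(\CS) - |\CS| \ge P_\eta(K_i) - |K_i|$, and show that an extreme integer point of $\CS$ along an edge parallel to $\bfu$ is $\eta$-generated. The second step is to apply Lemmas~\ref{nonunique-periodic} through~\ref{trapezoid} and the final counting argument from Theorem~\ref{trichotomy}: either $\eta$ is forced to be periodic in the direction $\bfu$ on a large convex slab, or the number of colorings of such a slab is at most half its cardinality, whence Theorems~1.4 and~1.5 of~\cite{CK} give periodicity.

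The main obstacle is that the hypothesis only guarantees \emph{subexponential}, rather than linear, growth of $P_\eta(K_i)$. Consequently, the balanced set produced in the first step may have arbitrarily large width, and the constants $p$, $n_1$, $k_1$ controlling the period propagation in Lemmas~\ref{extend-periodicity} and~\ref{few-periodic} must be allowed to grow with $i$. The delicate point is to show that these growing parameters do not overwhelm the counting argument in Lemma~\ref{few-periodic}, which seems to require a careful diagonal choice of $i$ relative to the slab dimensions. An alternative, potentially cleaner, approach is to first establish the intermediate statement that the convex hypothesis implies the existence of a \emph{single} finite convex set $\CS \subset \Z^2$ with $P_\eta(\CS) \le |\CS|$, reducing the conjecture to the Nivat-type results for convex sets already developed in~\cite{CK}; the main difficulty there is precisely an asymptotic-to-pointwise conversion, for which no obvious argument presents itself and which appears to be genuinely nontrivial.
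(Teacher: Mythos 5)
This statement is posed in the paper as an open conjecture in the ``Further directions'' section; the authors give no proof of it, and your proposal does not close the gap either --- by your own admission in the final paragraph, the central difficulty is left unresolved. The gap is genuine and is exactly where you locate it: every piece of machinery you propose to reuse (Lemma~\ref{balanced} producing balanced sets, the finite Morse--Hedlund argument in Lemma~\ref{nonunique-periodic}, the counting in Lemmas~\ref{few-periodic} and~\ref{period-break}, and Theorems~1.4 and~1.5 of~\cite{CK}) is driven by a \emph{pointwise linear} complexity bound $P_\eta(\CS)\le|\CS|$ on some finite set, whereas the hypothesis here only supplies an \emph{asymptotic subexponential} bound $\log P_\eta(K_i)=o(\tau_\bfu(K_i))$ along a sequence. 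The minimality argument in Lemma~\ref{balanced} hinges on the quantity $P_\eta(\CS)-|\CS|$ being controlled from the outset (the inequality~\eqref{edgeremoval} uses $P_\eta(R_2)\le n_1'k_1$ directly), and there is no mechanism in the paper for extracting such a set from the limit hypothesis; this is precisely the ``asymptotic-to-pointwise conversion'' you flag, and no argument for it is known.

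A second, independent obstruction: even if all the lemmas did generalize, mirroring the proof of Theorem~\ref{trichotomy} cannot yield the conjecture's conclusion. In case~(i) of that theorem one obtains only that $h(\bfu)=0$ for all $\bfu$, which is far from periodicity --- the Cassaigne-type low-complexity aperiodic configurations mentioned at the end of the paper have zero directional entropy in every direction. The trichotomy delivers periodicity only in its second branch, so any proof of this conjecture must contain an argument that rules out the aperiodic zero-entropy case altogether, and nothing in the paper (or in your sketch) addresses that. You should present this as a program with two identified open steps rather than as a proof.
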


Closely related, we ask: 
\begin{question}
\label{isolatedzero}
Say that $\eta$ has an isolated, rational direction of zero directional entropy 
and that $P_\eta(n,k) \le nk$.  Must $\eta$ be periodic?
\end{question}

The following example shows that if the complexity assumption is removed, then the answer is no: 

\begin{example}
Let $\alpha\colon \Z\to \{0,1\}$ such that 
$P_{\alpha}(n) = 2^n$ for all $n \in \N$.
Define $\eta \colon  \Z^2 \to \{0,1,2,3\}$ by $\eta(n,k) = \alpha(n)$ for each $k \neq 0$, and 
$\eta(n,0) = \alpha(n) + 2$.  
Then for the $\Z^2$ action on $X_\eta$ by translations, $h(\bfe_2) = h(-\bfe_2) = 0$,
but $h(\bfu) > 0$ for all other unit vectors $\bfu$.
\end{example}

\begin{proof}
We first prove $h(\bfe_2) = 0$ (the proof that $h(-\bfe_2) = 0$ is analogous).
Fix $t > 0$. There are $2^{2t+1}$ $\alpha$-colorings of $[-t,t]$.
For each of these $\alpha$-colorings $f$,
there are at most $s+2t +2$ $\eta$-colorings of $[-t,t] \times [-t,s+t]$
for which $\eta(i,j) = f(i)$ for some $-t \le j \le s+t$.
Hence, 
$$P_\eta([0,s\bfe_2]^{(t)}) \le P_\eta([-t,t] \times [-t,s+t]) \le 2^{2t+1}(s+2t+2).$$
Thus,
$$\overline{\lim_{s\to \infty}} \frac{\log P_\eta([0,s\bfe_2]^{(t)})}{s}
		 = \overline{\lim_{s\to \infty}} \frac{(2t+1)\log 2 + \log(s+2t+2)}{s} = 0.$$
By Lemma~\ref{entropycomplex}, it follows that $h(\bfe_2) = 0$.

For $\bfu \neq \pm \bfe_2$, let $m = \frac{1}{\|\proj_{\bfe_1} \bfu\|}$ where $\proj_{\mathbf{v}}$ is the projection onto the
direction $\mathbf{v}$.
By assumption $m < \infty$.
Then $\|\proj_{\bfe_1} m\bfu\| = 1$, and 
so $[0,ms\bfu]^{(1)} \cap \{i\} \times \Z \neq \emptyset$
for each $0 \le i \le s$.
For any set $K \subset \mathbb{R}^2$,
if $\vert\{i \in \Z \colon K \cap \{i\} \times \Z\neq \emptyset\} \vert= k_1$
and $\vert\{j \in \Z \colon K \cap \Z \times \{j\}\neq \emptyset\}\vert = k_2$,
then $P_\eta(K) = (k_2+1)2^{k_1}$.
Hence,
\begin{equation*}
\overline{\lim_{s\to \infty}} \frac{\log P_\eta([0,s\bfu]^{(t)})}{s} 
		\ge \overline{\lim_{s\to \infty}} \frac{\log(2^{(s+2t)/m})}{s} = \frac{1}{m}.
		\hfill \qedhere
		\end{equation*}
\end{proof}

Finally, we can ask how much of this holds in higher dimensions.  While there are 
examples~\cite{ST2} showing that the analog Nivat's Conjecture is false for dimension $d\geq 3$, 
it is possible that the results on directional entropy generalize.  
We remark that Cassaigne~\cite{Cas} constructed aperiodic examples  in higher dimensions which 
satisfy the higher dimensional analog of the complexity 
assumptions used in our results.  These all have zero directional entropy in all directions, 
and so do not rule out a higher dimensional version of our theorem: 
\begin{question}
Does the analog of Theorem~\ref{trichotomy} hold for $\eta\colon\Z^d\to\CA$, where $d\geq 3$? 
\end{question}

\end{document}